\definecolor{env_back}{gray}{0.8}
\definecolor{thm_color}{rgb}{0,0,1}
\newtheorem{thm}{Theorem}[section]
\newtheorem{cor}[thm]{Corollary}
\newtheorem{lem}[thm]{Lemma}
\newtheorem{prop}[thm]{Proposition}
\newtheorem{conj}[thm]{Conjecture}
\newtheorem*{clm*}{Claim}
\theoremstyle{definition}
\newtheorem{dfn}[thm]{Definition}
\newtheorem{exm1}[thm]{Example}
\theoremstyle{remark}
\newtheorem{rem}[thm]{Remark}
\newenvironment{lem*}[1]{\vspace{1ex}\noindent
{\bf Lemma* (#1).} [restatement]  \hspace{0.5em} \em }{ }
\newenvironment{thm*}[1]{\vspace{1ex}\noindent 
{\bf Theorem* (#1).} [restatement]  \hspace{0.5em} \em }{ }
\newcommand{\N}{\mathbb{N}}
\newcommand{\R}{\mathbb{R}}
\newcommand{\C}{\mathbb{C}}
\newcommand{\eps}{\varepsilon}
\renewcommand{\Pr}{}
\let\Pr\relax
\DeclareMathOperator{\Pr}{\mathbb{P}}
\def\squareforqed{\hbox{\rlap{$\sqcap$}$\sqcup$}}
\def\qed{\ifmmode\squareforqed\else{\unskip\nobreak\hfil
\penalty50\hskip1em\null\nobreak\hfil\squareforqed
\parfillskip=0pt\finalhyphendemerits=0\endgraf}\fi}
\newcommand{\ignore}[1]{ }
\newcommand{\p}{\partial}
\newcommand{\F}{\mathcal{F}}
\newcommand{\G}{\mathcal{G}}
\newcommand{\B}{\mathcal{B}}
\renewcommand{\G}{G}
\renewcommand{\H}{H}
\renewcommand{\F}{\C}
\newcommand{\haar}{\mathfrak{m}}
\newcommand{\gradi}[1]{ |\nabla #1 |_{\infty} }
\newcommand{\gradmu}[1]{ |\nabla #1 |_{\mu,2} }
\newcommand{\gr}{d} % Group growth
\newcommand{\dv}{\delta} % Dimension of V
\date{}
\author[1]{Idan Perl \thanks{perli@post.bgu.ac.il}}
\author[2]{Maud Szusterman \thanks{maud.szusterman@gmail.com}}
\affil[1]{Department of Mathematics, Ben Gurion University of the Negev, Be'er Sheva, Israel.  }
\affil[2]{Unit\'e de Math\'ematiques Pures et Appliqu\'ees, Ecole Normale Sup\'erieure de Lyon, Lyon, France}
\title{Harmonic functions on locally compact groups of polynomial growth }
\begin{document}

\maketitle
\begin{abstract}
We extend a theorem by Kleiner, stating that on a group with polynomial growth, the space of harmonic functions of polynomial of at most $k$ is finite dimensional, to the settings of locally compact groups equipped with measures with non-compact support. 
\end{abstract}

%%%%%%%%%%%%%%%%%%%%%%%%%%%%%%%%%%%%%%%%%%%%%%%%%%%%%%%%%%
%\begin{itemize}
%\item Think about theorems 1.2,1.3 in BE95. It seems that in the connected case, the Liouville property is equivalent to amenability. This implies that $\dim HF(G,\mu)$ is finite for all courteous $\mu$ or for none, no? well, not exactly. If it is finite for one measure, then then group is Liouville and thus amenable, but this does not imply finiteness for another measure. Also, if the dimension is infinity, we don't know shit. Say something about the connection between $dim BHF$ and $dim HF_k$. 

%\item Also, Theorem 1.3 says that there are no positive harmonic functions ones $G$ has polynomial growth. But in the other paper, we show that of $G$ has super polynomial growth and it is connected, then there exists a positive harmonic function. Well, this is actually carried out in BE95. Mention this equivalence in the connected case.  
%\item Make sure it is ok to use $\C$. 
%\item in the definition of $HF_k$, should the functions be continuous? according to Azencott, Proposition I.6, page 23 they are automatically continuous ones reasonable assumptions on the measure.
%\item $\frac{d\mu^{*n}}{d\haar}\geq c>0$ on $K$.
%\end{itemize}

\section{Introduction}
\label{Intro}
%%%% LOCALLY COMPACT %%%%%

The space of bounded harmonic functions on locally compact groups and Riemannian manifolds has been extensively studied over the past. We refer to the papers \cite{Avez76, Aze70, KaiMa83, HebSa93} and also \cite{Ers10, Fur02} for background on this subject.

Over the last few years, there has been a growing interest in \textit{unbounded} harmonic functions. Following the lines of Colding \& Minicozzi's proof of Yau's conjecture \cite{ColMin97}, Kleiner proved the following theorem:
\begin{thm}[\cite{Kle10}]
\label{Kleiner}
Let $\G$ be a finitely generated group of polynomial growth and $S$ a symmetric generating set. Then for any $k\in\N$ the space $HF_k(\G,S)$ of harmonic functions of polynomial growth of degree at most $k$ on the Cayley graph $(\G,S)$ is finite dimensional.  
\end{thm}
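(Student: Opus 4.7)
The plan is to adapt the Colding--Minicozzi argument for Yau's conjecture \cite{ColMin97} to the discrete Cayley graph setting. The three analytic inputs are (i) volume doubling, (ii) an $L^{2}$ Poincar\'e inequality on balls, and (iii) a discrete Caccioppoli (reverse Poincar\'e) inequality for harmonic functions. These feed into a linear-algebraic dimension count applied to an orthonormal basis of the candidate space $V \subseteq HF_{k}(G,S)$.

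For the analytic preliminaries, polynomial growth immediately gives doubling $|B_{2R}| \leq C_{d}|B_{R}|$. The $L^{2}$ Poincar\'e inequality
\[
\sum_{x \in B_{R}} |f(x) - \bar f_{R}|^{2} \leq C_{P} R^{2} \sum_{\substack{x \sim y \\ x,y \in B_{CR}}} |f(x) - f(y)|^{2}
\]
then follows from vertex-transitivity of the Cayley graph by a chaining argument in the spirit of Coulhon--Saloff-Coste. For Caccioppoli, pick a $(1/R)$-Lipschitz cut-off $\eta$ equal to $1$ on $B_{R}$ and vanishing off $B_{2R}$, multiply the harmonic equation for $u$ by $\eta^{2} u$, and sum by parts; this yields
\[
\sum_{\substack{x \sim y \\ x,y \in B_{R}}} |u(x) - u(y)|^{2} \leq \frac{C}{R^{2}} \sum_{x \in B_{2R}} u(x)^{2}.
\]
Chaining Poincar\'e with Caccioppoli produces the scale-invariant oscillation bound $\sum_{B_{R}} |u - \bar u_{R}|^{2} \leq C \sum_{B_{CR}} u^{2}$ for harmonic $u$.

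For the dimension count, fix a subspace $V \subseteq HF_{k}(G,S)$ of dimension $N$ and a large scale $R$, and choose a basis $u_{1},\ldots,u_{N}$ of $V$ orthonormal with respect to $\langle f,g\rangle_{R} = \sum_{x \in B_{R}} f(x) g(x)$. Following Colding--Minicozzi, I would compare the Gram matrices $G_{r} = (\langle u_{i}, u_{j}\rangle_{B_{r}})_{i,j}$ at the scales $R$ and $\Lambda R$ for $\Lambda \gg 1$. On one hand, polynomial growth of degree at most $k$ of each $u_{i}$, combined with doubling, should yield an upper bound of the form $\mathrm{tr}(G_{\Lambda R}) \leq C(k,d)\Lambda^{\alpha} N$ with $\alpha = \alpha(k,d)$. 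On the other hand, propagating the Poincar\'e--Caccioppoli chain outwards through concentric annuli, together with a careful choice of elements of $V$ whose $L^{2}$ mass is forced out of $B_{R}$, should produce a super-linear lower bound of the form $\mathrm{tr}(G_{\Lambda R}) \geq c(k,d)\Lambda^{\beta} N^{1+\delta}$ for some $\delta = \delta(k,d) > 0$. Combining the two bounds and choosing $\Lambda$ large forces $N \leq N_{0}(k,d)$.

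I expect the main obstacle to be the super-linear lower bound in the dimension count. Orthogonality on $B_{R}$ does not by itself force the $u_{i}$ to spread onto $B_{\Lambda R}$ faster than linearly in $N$; one must exploit harmonicity via Caccioppoli to rule out excessive localisation, and then convert smallness of Dirichlet energy into a quantitative lower bound on outward $L^{2}$ mass via Poincar\'e. Making the exponents $\alpha$ and $\delta$ compatible for some choice of $\Lambda$ is the technically most delicate point, and is where the polynomial growth hypothesis is used in an essential way.
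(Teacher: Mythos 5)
Your three analytic inputs --- volume doubling, the $L^2$ Poincar\'e inequality on balls, and the Caccioppoli (reverse Poincar\'e) inequality --- are exactly the ones the paper uses (Lemma \ref{lem: infinity Poincare}, Proposition \ref{prop: courteous Poincare inequality}, Proposition \ref{prop: courteous reverse Poincare}), and your sketch of how to derive them is fine. The issue is the linear-algebraic dimension count. You propose to compare traces of the Gram matrix at scales $R$ and $\Lambda R$: an upper bound $\mathrm{tr}(G_{\Lambda R}) \leq C\Lambda^\alpha N$ together with a super-linear lower bound $\mathrm{tr}(G_{\Lambda R}) \geq c\Lambda^\beta N^{1+\delta}$. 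The upper bound is unproblematic (Hadamard plus polynomial growth plus doubling, essentially the computation in \eqref{Adam Amar}). But the super-linear lower bound is the entire content of the theorem, and you yourself flag it as ``the main obstacle'': orthonormality on $B_R$ only gives $\mathrm{tr}(G_{\Lambda R}) \geq N$, and ``propagating the Poincar\'e--Caccioppoli chain through annuli'' does not by itself produce the extra factor $N^\delta$. As written the argument does not close.

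Kleiner's proof, which the paper follows, never needs a super-linear trace bound; it extracts finiteness from a covering number instead. Two ingredients replace your lower bound. First, a pigeonhole on Gram \emph{determinants} (Lemma \ref{det doubling}): since $\det Q_R$ grows at most polynomially in $R$, there are infinitely many ``good'' scales $R$ with $\det Q_{6R}/\det Q_R \leq \Delta^{N/2}$, $\Delta$ independent of $N$; taking a basis simultaneously $Q_R$-orthonormal and $Q_{6R}$-orthogonal, at least half the basis vectors satisfy $Q_{6R}(u,u)\leq \Delta\,Q_R(u,u)$, yielding a ``doubling'' subspace $\mathcal U$ of dimension $\geq N/2$. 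Second, a cover $\{B(x_j,\eps R)\}_{j\leq J}$ of $B(R)$ with $J$ and the overlap multiplicity bounded by the doubling constant, and the averaging map $\phi:\mathcal V\to\F^J$. For $u\in\ker\phi$, Poincar\'e on each small ball followed by reverse Poincar\'e on $B(2R)$ gives $Q_R(u,u) \leq C\eps^2\,Q_{6R}(u,u)$ (plus, in the non-compactly-supported setting, an exponentially small error term controlled via the $k$-norm equivalence). If also $u\in\mathcal U$, the controlled growth turns this into $Q_R(u,u)\leq C\eps^2\Delta\,Q_R(u,u)+\text{(small)}$, forcing $u=0$ for small $\eps$. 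Hence $\phi|_{\mathcal U}$ is injective and $N/2 \leq J$, with $J$ depending only on the doubling constant and $\eps$. The finiteness of $N$ is driven by the covering number, not by any super-linear spreading of $\mathrm{tr}(G_{\Lambda R})$ in $N$. I would restructure your dimension count accordingly: replace the trace lower bound with the determinant pigeonhole to produce $\mathcal U$, and replace the ``mass forced outward'' heuristic with injectivity of the averaging map on $\mathcal U$.
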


Kleiner used this fact to obtain a non-trivial representation of $\G$, which he then employed to deduce a new proof of Gromov's theorem on groups with polynomial growth, namely that a finitely generated group of polynomial growth has a finite-index nilpotent subgroup. It is natural to ask whether the converse of Kleiner's theorem holds. That is, suppose we know that $\dim HF_k(\G,\mu)<\infty$ for some $k\geq 1$ and a probability measure $\mu$, is it true that $\G$ has polynomial growth? This was confirmed in  \cite{MeYa16}, for the class of finitely generated solvable groups. Their proof method suggested the consideration of a class of measures which they dubbed \textit{courteous}, and on which we will elaborate after introducing the definition. 

\begin{conj}
\label{main conjecture}
Let $\G$ be a locally compact compactly generated group, and let $\mu$ be a courteous measure. Let $HF_k(\G,\mu)$ denote the space of $\mu$-harmonic functions with polynomial growth of degree at most $k$. Then the following are equivalent:
\begin{enumerate}
\item $\G$ has polynomial growth.
\item $\dim HF_k(\G,\mu)<\infty$ for all $k\geq 1$.
\item $\dim HF_k(\G,\mu)<\infty$ for some $k\geq 1$.
\end{enumerate}
In the finitely generated case, another equivalent condition is
\begin{enumerate}
\item[4.] $\G$ has a finite-index nilpotent subgroup. 
\end{enumerate}
\end{conj}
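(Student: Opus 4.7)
The plan is to break the equivalence into its constituent implications and address each in turn, with the bulk of the effort concentrated on the direction $(3)\Rightarrow(1)$. The implication $(1)\Rightarrow(2)$ is the extension of Kleiner's theorem to be established in this paper: polynomial growth of $\G$ combined with courteousness of $\mu$ provides the volume doubling and reverse Poincar\'e estimates needed to run the Colding--Minicozzi dimension bound on $HF_k(\G,\mu)$ for every $k\geq 1$. The implication $(2)\Rightarrow(3)$ is immediate. In the finitely generated case the equivalence $(1)\Leftrightarrow(4)$ is Gromov's theorem; in the compactly generated locally compact setting one would invoke instead the Losert, or Breuillard--Green--Tao, structure theorem relating polynomial growth to virtual nilpotence modulo a compact normal subgroup.

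For $(3)\Rightarrow(1)$, I would follow the strategy developed by Meyerovitch and Yadin in the solvable case, adapted to the locally compact setting. Suppose $\dim HF_k(\G,\mu)<\infty$ for some $k\geq 1$; we want to derive polynomial growth. If $HF_k(\G,\mu)$ reduces to the constants for all $k$, then $\G$ is Liouville in a strong quantitative sense, which under courteousness already forces structural restrictions on $\G$ by standard entropy arguments. Otherwise one picks a non-constant $f\in HF_k(\G,\mu)$ and forms the family of difference quotients $g\mapsto \bigl(x\mapsto f(gx)-f(x)\bigr)$. After suitable normalization this produces a non-trivial harmonic $1$-cocycle $c:\G\to V$ with values in a unitary $\G$-representation $V$ built from $HF_k(\G,\mu)$ itself, and finite-dimensionality of $HF_k$ is what makes $V$ finite-dimensional, hence a Lie representation. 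From such a cocycle one extracts a non-trivial homomorphism from a finite-index or closed cocompact subgroup of $\G$ into a Euclidean motion group, via a Shalom-type reduction. Iterating this construction while tracking the growth of the derived cocycles through the courteousness of $\mu$ should yield a polynomial upper bound on the growth function of $\G$.

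The main obstacle I anticipate is twofold. First, the cocycle construction and the Shalom reduction were originally developed for discrete finitely generated groups; transferring them to compactly generated locally compact $\G$ requires replacing the Cayley graph Laplacian by the Dirichlet form of $\mu$ throughout, and it is precisely courteousness that must be exploited to ensure $\mu$-harmonic cocycles carry geometric information about word-length balls rather than only heat-kernel balls. Second, closing the loop from a non-trivial finite-dimensional representation back to polynomial growth uses Gromov's theorem in the discrete setting; the locally compact analog would require a careful use of Losert--Breuillard--Green--Tao-type results and, most likely, a genuine extension of those techniques to handle non-compact kernels arising from the cocycle. I expect the quantitative passage from a harmonic cocycle to a polynomial growth bound, uniform in the courteous measure, to be the most delicate step of the whole argument.
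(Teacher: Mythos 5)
This statement is labelled as a \emph{conjecture} in the paper, and the paper does not prove it in full, so there is no ``paper's own proof'' to compare against. The paper only establishes the single implication $(1)\Rightarrow(2)$ (Theorem \ref{main theorem}); it records that $(2)\Rightarrow(3)$ is trivial, that $(4)\Rightarrow(1)$ follows from Bass--Guivarc'h, and that $(3)\Rightarrow(1)$ is known only in the special cases of finitely generated solvable groups \cite{MeYa16} and of connected groups \cite{PeYa18}. The general case of $(3)\Rightarrow(1)$, and hence the full equivalence, remains open. Your proposal correctly identifies this decomposition and correctly identifies $(1)\Rightarrow(2)$ as the content of the paper, but the portion devoted to $(3)\Rightarrow(1)$ is a speculative programme, not a proof: the passage from a non-constant $f\in HF_k$ to a non-trivial harmonic cocycle with values in a unitary representation, the Shalom-type reduction, and the ``iterate and track growth'' step are each stated as hopes rather than established steps, and you yourself flag the key gaps. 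You cannot present a conjecture's proof by outlining obstacles.

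One concrete error in the proposal: you suggest that in the compactly generated locally compact setting the equivalence $(1)\Leftrightarrow(4)$ survives via Losert or Breuillard--Green--Tao ``modulo a compact normal subgroup.'' The paper explicitly warns against this (\S\ref{Intro}): by Example 7.9 of \cite{Bre07}, a compactly generated locally compact group of polynomial growth need \emph{not} be nilpotent-by-compact, which is exactly why condition (4) is stated only in the finitely generated case. The group $\R\ltimes(\R^2\times\R^2)$ with $\R$ acting via a dense one-parameter subgroup of $SO(2)\times SO(2)$, discussed later in the paper, is a counterexample. Your sketch would therefore break at the step where you close the loop from a finite-dimensional representation back to virtual nilpotence; in the general locally compact case that loop cannot close in the form you propose.
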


The implication $(2)\implies (3)$ is trivial, and the implication $(4)\implies (1)$ is a standard computation and follows from the Bass-Guivarc'h formula, see \cite{Bass72}, \cite{Gui80}. As mentioned above, the implication $(3)\implies (1)$ was proved in \cite{MeYa16} for the case of solvable finitely generated groups. In a yet unpublished work \cite{PeYa18}, the implication $(3)\implies (1)$ is proved for the case of connected groups. We note that in the general locally compact case, polynomial growth does not imply the existence of a nilpotent-by-compact subgroup, see Example 7.9 in \cite{Bre07}.   

The main result of this paper is the implication $(1)\implies (2)$. Kleiner proved it for finitely generated groups equipped with a measure uniformly distributed on the generating set. The proof generalizes in a straightforward way to finitely supported measures, but difficulty arises when considering groups that are not (necessarily) finitely generated, and measures with non-compact support. Two key elements of the proof are the Poincar\'e and reverse Poincar\'e inequalities. In Section 2, we prove a modification of those, adapted to the settings of locally compact group and courteous measures.

\subsection{Acknowledgements}

The authors thank Ariel Yadin for his continual support. The first named author was partially supported by the Israel Science Foundation (grant no. 1346/15)

\subsection{Notation \& definitions}
Throughout, let $\G$ be a compactly generated locally compact group, and let $S$ be a compact generating set, i.e.\ $G=\bigcup_{n=1}^{\infty} S^n$, where $S^n=\{s_1\cdots s_n:\ s_i\in S \}$. We also assume $S$ is symmetric, in the sense that $S=S^{-1}:=\{s^{-1}:\ s\in S\}$, and that $1\in S$. Denote by $\haar$ be the left invariant Haar measure on $\G$  normalized to $\haar(S)=1$. The generating set $S$ induces a left-invariant metric on $\G$, defined by
$$
d_S(x,y):=\min \{n:\   x^{-1}y\in S^n\}.
$$
%Note: do we allow $S$ to "topologically" generate $\G$? No.
Different choices of generating sets yield metrics that are bi-Lipschitz. For an element $x\in \G$, we write $|x|=d_S(1,x)$. We say that $\G$ has \textit{polynomial growth} if there exist $c,\gr>0$ such that $\haar(S^n)\leq c n^{\gr}$ for all $n>0$. 
\begin{dfn}
A probability measure $\mu$ on $\G$ is called \textit{courteous}, if
\begin{itemize}
%\item $\mu$ is absolutely continuous with regards to $\haar$.
\item $\mu$ has a continuous density with regards to $\haar$.
\item $\mu$ is \textit{symmetric}, i.e.\ $\mu(A)=\mu(A^{-1})$ for any measurable set $A$;
\item $\mu$ is \textit{adapted}, i.e.\ the support of $\mu$ generates $\G$; %i.e. $\G= \bigcup_n supp(\mu^n)$.
\item $\mu$ has \textit{exponential tail}, i.e.\ $\Pr_{\mu}[|x|>t]\leq e^{-c_{\mu}t}$ for some $c_{\mu}>0$. 
\end{itemize}
\end{dfn}
An immediate example of a courteous measure in the case of finitely generated groups, is the uniform measure on a symmetric generating set $S$. The exponential tail condition is the condition that connects the metric and the measure, and it does not depend on the specific choice of generating set.

A measurable function $f:\G\to\F$ is called \textit{$\mu$-harmonic} if
\begin{align*}
f(x)=\int f(xs)d\mu(s) \quad \forall x\in\G.
\end{align*}
For a function $f:\G\to\F$ and $k\in\N$, define the (perhaps infinite) quantity
\begin{align*}
||f||_k:=\limsup_{r\to\infty} r^{-k}\cdot \sup\{|f(x)|:\ |x|\leq r\}.
\end{align*}
If $||f||_k<\infty$, we say that $f$ has \textit{polynomial growth of degree at most $k$}. Note that $||f||_k<\infty$ is equivalent to $|f(x)|\leq c(1+|x|)^k$ for some $c>0$ and all $x\in \G$. 

We are now ready to define $HF_k(\G,\mu)$, which is the main object of interest in this work. Let
\begin{align*}
HF_k(\G,\mu):=\{f:\G\to\F:\ ||f||_k<\infty,\ \ \text{$f$ is $\mu$-harmonic} \}. 
\end{align*}
The space $HF_k(\G,\mu)$ is the space of $\mu$-harmonic functions of polynomial growth of degree at most $k$. By \cite[Proposition I.6]{Aze70}, any $\mu$-harmonic function is continuous. The group $\G$ acts on functions on $\G$ by left translations and we note that $||g.f||_k=||f||_k$. Moreover, since the group acts from the left and harmonicity is checked on the right, the space $HF_k(\G,\mu)$ is a $\G$-invariant subspace of $\F^G$. Also, the space $HF_k(\G,\mu)$ does not depend on choice of generating set. We note however, that it highly  depends on the measure $\mu$. 

\subsubsection*{Courteous measures.} We briefly discuss the motivation for considering the class of courteous measures. The nature of Conjecture \ref{main conjecture} forces one to pass freely to finite index subgroups. The following proposition gives the motivation for considering the class of courteous measures. Let $\G$ be a compactly generated locally compact group equipped with a measure $\mu$, and let $(X_t)_t$ be a discrete time random walk such that the increments $X_t^{-1}X_{t+1}$ are i.i.d.\ $\mu$. Let $\H$ be a subgroup of $\G$. Let
\begin{align*}
\tau_{\H} := \inf \{t\geq 1 :\ X_t\in \H\}
\end{align*}
be the \textit{return time} of $\H$, and let $\mu_H$ be the law of $X_{\tau_H}$.
%\begin{align*}
%\mu_{\H} (x):=\Pr_{\mu}[X_{\tau_{\H}}=x]
%\end{align*}
When $\mu$ is a generating measure and $\H$ is a finite index subgroup, it is well known that $\tau_{\H}$ is a.s.\ finite, hence $\mu_{\H}$ is well defined. The following fact is the principal idea behind considering the class of courteous measures.

\begin{prop}[\cite{BE95} Lemma 3.4, \cite{MeYa16} Proposition 3.4] Let $\G$ be a compactly generated locally compact group, $\mu$ a courteous measure, and $\H$ a finite index subgroup. Then $\mu_{\H}$ is a courteous measure on $\H$, and the restriction map $f\mapsto f|_{\H}$ is a linear bijection from $HF_k(\G,\mu)$ to $HF_k(\H,\mu_{\H})$.
\end{prop}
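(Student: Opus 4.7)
The plan is to establish the two assertions -- that $\mu_\H$ is courteous and that restriction is a linear bijection $HF_k(\G,\mu)\to HF_k(\H,\mu_\H)$ -- via martingale and time-reversal arguments for the random walk $(X_t)$ with i.i.d.\ $\mu$-increments $(W_i)$.

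\emph{Courtesy of $\mu_\H$.} Symmetry follows from a time-reversal argument: for a trajectory $1=X_0,X_1,\ldots,X_n=h$ with $\tau_\H=n$, the map on increments $(W_1,\ldots,W_n)\mapsto(W_n^{-1},\ldots,W_1^{-1})$ is measure-preserving on $\mu^{\otimes n}$ (by symmetry of $\mu$), preserves the event $\{\tau_\H=n\}$, and sends $X_n$ to $X_n^{-1}$; summing over $n$ yields $\mu_\H(A)=\mu_\H(A^{-1})$. Continuity of the density of $\mu_\H$ follows from decomposing $\mu_\H=\sum_{n\geq 1}\nu_n$ according to the value of $\tau_\H$, where each $\nu_n$ has continuous density inherited from $\mu^{*n}$, with uniform convergence of the series ensuring continuity of the limit. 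Adaptedness is immediate from adaptedness of $\mu$ and finite index of $\H$. The principal work is the exponential tail: for $M>\E_\mu[|x|]$,
\begin{align*}
\Pr[\,|X_{\tau_\H}|>t\,]\leq \Pr[\tau_\H>t/M]+\Pr\Big[\sum_{i=1}^{\lfloor t/M\rfloor}|W_i|>t\Big].
\end{align*}
The second summand decays exponentially by a Chernoff bound using the exponential tail of $\mu$; for the first, the walk projected to the finite coset space $\H\backslash\G$ is, by adaptedness, an irreducible Markov chain on a finite state space, so the hitting time of the identity coset has exponential tail.

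\emph{Bijectivity of restriction.} For $f\in HF_k(\G,\mu)$, optional stopping applied to the martingale $(f(X_t))$ at $\tau_\H$ -- with integrability ensured by polynomial growth of $f$ together with the exponential moments of $\tau_\H$ and of $|X_{\tau_\H}|$ from the previous paragraph -- yields $f(x)=\int f(xh)\,d\mu_\H(h)$ for $x\in\H$, so $f|_\H$ is $\mu_\H$-harmonic; polynomial growth of $f|_\H$ follows since $d_S|_\H$ is bi-Lipschitz equivalent to any word metric on $\H$ induced by a compact symmetric generating set. Injectivity is then immediate: the identity $f(x)=\E_x[f(X_{\tau_\H})]$ also holds for $x\in\G\setminus\H$, and $X_{\tau_\H}\in\H$. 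For surjectivity, given $g\in HF_k(\H,\mu_\H)$, I set $f(x):=\E_x[g(X_{\tau_\H})]$; the Markov property applied to the first step gives harmonicity of $f$ on $\G$, the computation of the previous sentence gives $f|_\H=g$ via $\mu_\H$-harmonicity of $g$, and polynomial growth of $f$ follows from the triangle inequality $|X_{\tau_\H}|\leq|x|+\sum_{i=1}^{\tau_\H}|W_i|$ together with exponential moments of $\tau_\H$ and $|W_1|$.

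The main obstacle is the exponential-tail estimate for $\mu_\H$, since this is where both the finite index of $\H$ and the exponential tail of $\mu$ are used essentially, and without it one loses both the courtesy of $\mu_\H$ and the integrability needed to run optional stopping and the hitting-distribution extension. Once that estimate is in hand, the remainder of the proof is standard martingale manipulation combined with the strong Markov property.
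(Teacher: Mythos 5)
The paper does not prove this proposition; it quotes it from \cite{BE95} (Lemma~3.4) and \cite{MeYa16} (Proposition~3.4), so there is no in-paper argument to compare against. Your sketch is a sound reconstruction of the standard proof and identifies the right crux: the exponential tail of $\tau_\H$ (hence of $|X_{\tau_\H}|$) is exactly what makes $\mu_\H$ courteous and what furnishes the integrability needed to run optional stopping and the hitting-kernel construction $f(x):=\E_x[g(X_{\tau_\H})]$. The time-reversal argument for symmetry is correct (one checks $Y_j:=X_n^{-1}X_{n-j}\notin\H$ for $1\le j\le n-1$ since $X_n\in\H$ and $\H$ is a subgroup, so $\{\tau_\H=n\}$ is preserved), the split $\Pr[|X_{\tau_\H}|>t]\le\Pr[\tau_\H>t/M]+\Pr[\sum_{i\le t/M}|W_i|>t]$ is the usual route, and the strong-Markov computation showing $f$ harmonic, $f|_\H=g$, and $\ker(\text{restriction})=0$ is all as it should be.

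One step you pass over too quickly is continuity of the density of $\mu_\H$. Writing $\mu_\H=\sum_{n\ge1}\nu_n$ with $\nu_n$ the sub-probability law of $X_n$ on $\{\tau_\H=n\}$, each $\nu_n$ does have a density, but ``uniform convergence of the series'' is not automatic: a courteous measure is only assumed to have a \emph{continuous} density, not a bounded one, so you cannot simply bound $\sup_h\rho_{\nu_n}(h)$ by $\sup\rho\cdot\Pr[\tau_\H>n-1]$. One has to either show continuity of each $\rho_{\nu_n}$ via dominated convergence with a locally uniform dominating function and then control the tail of the sum, or argue in $L^1$ and upgrade. This is precisely the sort of technical point the cited references handle; in a complete write-up it should not be waved through.
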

Put simply, by passing to a finite index subgroup, the space of harmonic function of polynomial growth of at most $k$ is essentially the same. This proposition is the motivation for working in the class of courteous measures, and not just compactly supported ones.

\subsection{Statement of main result \& corollaries}
The main result of this paper is implication $(1)\implies (2)$ of Conjecture \ref{main conjecture}. We prove:

\begin{thm}
\label{main theorem}
Let $\G$ be a compactly generated locally compact group of polynomial growth, $\mu$ a courteous measure, and $k\geq 1$. Then the space $HF_k(\G,\mu)$ of $\mu$-harmonic functions with polynomial growth of degree of at most $k$ is finite dimensional.
\end{thm}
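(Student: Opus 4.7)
My plan is to adapt Kleiner's proof of Theorem~\ref{Kleiner} from \cite{Kle10} to the courteous setting, using the Poincar\'e and reverse Poincar\'e inequalities established in Section~2 as the principal analytic input. The target is to show that every finite-dimensional subspace $V\subseteq HF_k(\G,\mu)$ satisfies $\dim V \leq C(k,\gr)$ for a constant depending only on $k$ and the polynomial growth degree $\gr$, which is then enough to conclude $\dim HF_k(\G,\mu)<\infty$.

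To execute the dimension bound, I would fix $N:=\dim V$ and, for each large radius $R$, take a basis $u_1,\ldots,u_N$ of $V$ that is orthonormal with respect to $\langle f,g\rangle_R := \int_{B_R} f\overline{g}\,d\haar$, which is positive definite on $V$ for $R$ sufficiently large. The combination of the reverse Poincar\'e inequality applied on concentric balls and the volume doubling $\haar(B_{2R})\leq C\haar(B_R)$ (a consequence of polynomial growth) then yields, in Kleiner's style, a sub-mean-value-type control on the ``diagonal reproducing kernel''
\begin{align*}
K(x) \ := \ \sum_{i=1}^N |u_i(x)|^2, \qquad K(x) \ \leq \ \frac{C}{\haar(B_r)} \int_{B(x,r)} K\, d\haar,
\end{align*}
valid whenever $B(x,r)\subseteq B_R$. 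Integrating this pointwise bound and using $\int_{B_R} K\, d\haar = N$ by orthonormality, together with the polynomial growth control $|u_i(x)|\leq C_i (1+|x|)^k$ rescaled so as to be compatible with the $L^2(B_R)$-normalization, yields an estimate of the form $N \leq C(k,\gr)$ independent of $R$ and $V$.

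The main obstacle is not the dimension-counting step above, which mirrors Kleiner's original argument with $\haar$ and the measure-theoretic gradient $\gradmu{\cdot}$ replacing counting measure and the discrete gradient, but rather the establishment of the Poincar\'e-type inequalities underlying it. For compactly supported $\mu$ these inequalities are essentially local; in the courteous case, $\mu$-harmonicity couples values of $f$ at arbitrarily distant points, producing boundary leakage across any finite ball. The exponential tail $\Pr_\mu[|x|>t]\leq e^{-c_\mu t}$ is exactly what tames this: the contribution from $\mu$-steps of length exceeding $t$ is at most $O(e^{-c_\mu t})$, so truncating at $t = O(\log R)$ reduces the analysis to an essentially local picture with only negligible correction terms, at the cost of lower-order logarithmic losses in the constants that do not affect the final uniform dimension bound. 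Once the resulting uniform Poincar\'e-type inequalities of Section~2 are available, the remainder of the argument is a faithful generalization of Kleiner's.
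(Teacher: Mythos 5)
Your high-level plan correctly identifies the roles of the Poincar\'e and reverse Poincar\'e inequalities and of the exponential-tail truncation, but the dimension-counting step you propose diverges from the paper's proof and contains a genuine gap. You invoke the Colding--Minicozzi ``reproducing kernel'' $K(x)=\sum_i |u_i(x)|^2$ together with a pointwise sub-mean-value inequality $K(x)\leq \frac{C}{\haar(B(x,r))}\int_{B(x,r)} K\, d\haar$. That is \emph{not} what Proposition~\ref{prop: courteous reverse Poincare} gives: the reverse Poincar\'e is a Caccioppoli-type bound on the $\mu$-Dirichlet energy of $f$ over $B$ by $R^{-2}\int_{3B}|f|^2$, not a pointwise $L^\infty$--$L^2$ estimate. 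Passing from Caccioppoli plus Poincar\'e plus doubling to a mean-value inequality requires a Moser-type iteration, which is neither established in Section~2 nor needed in the paper's argument. The paper (following Kleiner rather than Colding--Minicozzi) avoids it entirely: it covers $B(R)$ by $J$ balls of radius $\eps R$ with $J$ and the intersection multiplicity bounded purely via doubling, and shows the averaging map $\phi:\mathcal{V}\to\F^J$ is injective on a half-dimensional subspace at a suitable scale, yielding $\dim\mathcal{V}\leq 2J$.

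The second missing ingredient is the ``good scale'' selection, Lemma~\ref{det doubling}. Both the Poincar\'e step and the tail-error term couple $Q_R$ to $Q_{6R}$, so one must find scales $R$ and a half-dimensional subspace $\mathcal{U}\leq\mathcal{V}$ on which $Q_{6R}(u,u)\leq\Delta\, Q_R(u,u)$ with $\Delta$ depending only on $\gr$ and $k$; the paper obtains this from the polynomial-growth bound on $\det(Q_R)$ via Hadamard's inequality. Moreover, the error coming from the non-compact support of $\mu$ scales with the polynomial $k$-norm $c_u$, and one needs the norm-equivalence bound $c_u^2/Q_R(u,u)\leq M$ to see that the $c_u^2 e^{-c_\mu R}$ term becomes subdominant for $R$ large. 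Your remark that truncating $\mu$ at $t=O(\log R)$ yields ``negligible correction terms'' is plausible in spirit, but without the comparison between the $c_u$-scale of the error and the $Q_R$-scale of the main term, and without a good scale, the argument does not close.
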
 

In a related work \cite{MPTY17}, a structure theorem for the space $HF_k(\G,\mu)$ is given, under the assumption that $\mu$ is courteous and this space is finite dimensional. To state this result, we need to define the notion of a \textit{polynomial} on a group. 
\begin{dfn}[Polynomial]
\label{def:poly}
Given $f:\G\to\F$ and an element $u\in \G$ we define the \emph{left derivative} $\p_uf$ of $f$ with respect to $u$ by
$\p_uf(x)=f(u x)-f(x)$; that is, $\p_u f = u^{-1}f-f$, where $u^{-1}f$ is the left action of $u^{-1}$ on $f$.

Let $\H < \G$ be a subgroup. A function  $f:\G \to \F$ is called a \emph{polynomial with respect to $H$} if there exists some integer $k\geq 0$ such that
\[ \p_{u_1}\cdots\p_{u_{k+1}}f = 0\text{ for all }u_1,\ldots,u_{k+1}\in H \]
The \emph{degree} (with respect to $H$) of a non-zero polynomial $f$ is the smallest such $k$. When $\H=\G$ we simply say that  $f:\G\to\F$ is a \emph{polynomial}.
We denote the space of polynomials on $G$ $P^k(G)$. 
\end{dfn}

The structure theorem states:

\begin{thm}[\cite{MPTY17}]
\label{thm: polys}
Let $\G$ be a finitely generated group, $\mu$ a courteous measure, and $k\geq 1$. Suppose $\dim HF_k(\G,\mu)<\infty$. Then there is a finite-index normal subgroup $\H$ of $\G$ such that any $f\in HF_k(\G,\mu)$ is a polynomial of degree at most $k$ with respect to $H$. 
\end{thm}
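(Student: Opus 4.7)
The plan is to exploit that $G$ acts linearly on the finite-dimensional space $V:=HF_k(G,\mu)$ via translations, and to combine the structure of this representation with the polynomial-growth filtration $W_j := V\cap HF_j(G,\mu)$, $j=0,\ldots,k$. The goal is to produce a finite-index normal subgroup $H\triangleleft G$ for which $\pi(u)-I$ sends $W_j$ into $W_{j-1}$ for every $u\in H$ and every $j$; iterating this $k+1$ times on $W_k$ then yields $0$, which is exactly the polynomial condition $\partial_{u_1}\cdots\partial_{u_{k+1}}f=0$ required by Definition \ref{def:poly}.

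First I would analyse the image $\Gamma := \pi(G)\subseteq GL(V)$ of the translation representation $(\pi(g)f)(x)=f(g^{-1}x)$. The pointwise polynomial bound $|f(x)|\le C(1+|x|)^k$ gives $\|\pi(g)^n f\|_{L^\infty(B_R)} = O(n^k)$ for each $g$, $f$, $R$, so every $\pi(g)$ has spectrum on the unit circle with Jordan blocks of size at most $k+1$. The Zariski closure $A=\overline{\Gamma}^{\mathrm{Zar}}\subseteq GL(V)$ is therefore an algebraic group whose identity component $A^{\circ}$ admits no non-trivial torus, since a torus would produce eigenvalues of modulus different from $1$. Hence $A^{\circ}$ is unipotent. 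Setting $H_0:=\pi^{-1}(A^{\circ})$ produces a finite-index normal subgroup on which $\pi$ is unipotent; by Kolchin's theorem $\pi(H_0)$ can be simultaneously triangularised on $V$, which already yields $(\pi(u_1)-I)\cdots(\pi(u_d)-I)=0$ for $u_i\in H_0$, but with $d=\dim V$ rather than the sharp $k+1$.

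To sharpen the nilpotency degree from $\dim V$ down to $k+1$, I would replace Kolchin's flag by the coarser, $G$-invariant polynomial-growth filtration $W_0\subseteq W_1\subseteq\cdots\subseteq W_k=V$, whose length is exactly $k+1$. It then suffices to show that, after further shrinking $H_0$ to a finite-index normal subgroup $H$, each graded piece $W_j/W_{j-1}$ carries a trivial $H$-action, equivalently $\partial_u f\in W_{j-1}$ for all $u\in H$ and $f\in W_j$. On $W_j/W_{j-1}$ the induced $H$-action is unipotent on a finite-dimensional space; combining $\mu$-harmonicity with the courteousness of $\mu$ (exponential tail and adaptedness), one argues that the only unipotent translation representation arising from $\mu$-harmonic functions of exact polynomial growth $j$ is the trivial one, and an additional finite-index refinement absorbs any residual permutation action on a basis of the graded pieces.

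The principal obstacle is this last step, which is the true analytic heart of the theorem. For a function of polynomial growth of degree $j$ that is not a priori a polynomial in the algebraic sense of Definition \ref{def:poly}, $\partial_u f$ is only known to have growth at most $j$, not $j-1$; recovering the classical degree-drop from the qualitative hypothesis $\dim HF_k(G,\mu)<\infty$ is what makes the theorem non-trivial. The argument must use that the subspace of functions in $V$ of growth exactly $j$ modulo $W_{j-1}$ is so rigid---thanks to $\mu$-harmonicity averaging with a measure of full support---that any unipotent translation action on it is forced to be trivial. Once this degree-reduction lemma is in place, the filtered flag argument above completes the proof.
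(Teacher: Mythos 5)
Your first claim --- that the identity component $A^\circ$ of the Zariski closure of $\pi(G)$ is unipotent --- is not justified and is false in general. Having spectrum on the unit circle does not preclude a torus in the Zariski closure: the Zariski closure in $\C^\times$ of an irrational rotation group $\{e^{2\pi i n\alpha} : n\in\Z\}$ is all of $\C^\times$, a one-dimensional torus, even though every element of the group has modulus one. Passing from ``unit-circle spectrum'' to ``unipotent action'' requires a genuinely analytic input that your proposal does not supply. Your second, self-acknowledged gap --- the degree-reduction lemma for the filtration $W_j := V\cap HF_j(G,\mu)$ --- is equally fundamental and also missing: as you yourself note, $\partial_u f$ has the same order of polynomial growth as $f$, not one less, so this filtration is not $\partial_u$-decreasing, and no mechanism is offered to force the needed degree drop on the graded pieces.

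The argument in \cite{MPTY17} (reproduced in Section~\ref{section: polys} for the connected case) avoids both obstacles by choosing a different filtration. After passing to a finite-index normal subgroup $H$ and applying Lie--Kolchin, one decomposes $V = HF_k(G,\mu)$ into generalized weight spaces $V^*_\lambda$ for characters $\lambda\colon H\to\C^\times$. For a weight vector $f$ with $u.f = \lambda(u)f$, polynomial growth gives $|\lambda(u)|=1$ (your observation); the \emph{additional} input is the Liouville property of the courteous walk, which upgrades this to $\lambda\equiv 1$ and shows $f$ is constant. Hence $V=V^*_1$, the generalized weight space of the trivial character. Now the filtration $V_1^{(0)}\subset V_1^{(1)}\subset\cdots$ is $\partial_u$-decreasing \emph{by definition}: $f\in V_1^{(n)}$ if and only if $\partial_u f\in V_1^{(n-1)}$ for all $u\in H$, and $V_1^{(0)}=\{0\}$, so $f\in V_1^{(n)}$ gives $f\in P^n(G)$ with respect to $H$. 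The degree drop is built into the algebraic structure of the generalized eigenspace rather than derived from any growth estimate. This is precisely the substitute for your missing degree-reduction lemma; with the right filtration, there is simply nothing to prove about the quotients $W_j/W_{j-1}$.
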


It is then deduced, using a result about the Laplace operator and the fact that $\dim P^k(\G)$ is independent from $\mu$, that $\dim HF_k(\G,\mu)$ is independent from $\mu$. In conjunction with Theorem \ref{main theorem}, this gives the following.
\begin{cor}
\label{cor:dim.indep.of.mu}
Let $\G$ be a finitely generated group with polynomial growth. Then $\dim HF_k(\G,\mu)$ is finite and independent of $\mu$ for any $k\geq 1$ and courteous $\mu$.
\end{cor}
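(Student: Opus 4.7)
The plan is to deduce this corollary by combining Theorem \ref{main theorem} of the present paper with Theorem \ref{thm: polys} from the companion work \cite{MPTY17}, together with the hinted fact about the Laplace operator acting on polynomial spaces.

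The finite-dimensionality assertion is immediate. Regarding the finitely generated group $G$ as a discrete locally compact group, compactly generated by any finite symmetric generating set, Theorem \ref{main theorem} applies directly and yields $\dim HF_k(G,\mu) < \infty$ for every courteous probability measure $\mu$ and every $k \geq 1$.

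To establish $\mu$-independence, I would proceed as follows. Since $\dim HF_k(G,\mu) < \infty$, Theorem \ref{thm: polys} furnishes a finite-index normal subgroup $H = H_\mu \leq G$ such that $HF_k(G,\mu) \subseteq P^k(H)$, i.e.\ every element of $HF_k(G,\mu)$ is annihilated by any composition of $k+1$ left derivatives taken from $H$. A short computation shows that the Markov operator $M_\mu f(x) = \int f(xs)\, d\mu(s)$ commutes with every left derivative $\partial_u$, since $\partial_u M_\mu f(x) = \int [f(uxs) - f(xs)]\, d\mu(s) = M_\mu \partial_u f(x)$. Consequently $\Delta_\mu = M_\mu - I$ preserves $P^k(H)$, and one obtains
\[ HF_k(G,\mu) = \ker\bigl(\Delta_\mu|_{P^k(H)}\bigr). \]

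The remaining step is to invoke the Laplace-operator result from \cite{MPTY17}: this should express $\dim \ker(\Delta_\mu|_{P^k(H)})$ in terms of the $\mu$-free quantities $\dim P^j(G)$ for $j \leq k$, after which the desired independence is automatic. The main obstacle in this outline is precisely this last appeal. Because the subgroup $H_\mu$ produced by Theorem \ref{thm: polys} may itself depend on $\mu$, one must argue that the dimension of the relevant kernel is insensitive to the choice of finite-index subgroup — for instance by comparing two measures $\mu_1,\mu_2$ on a common finite-index refinement of $H_{\mu_1}\cap H_{\mu_2}$, or by verifying that $\dim(\ker\Delta_\mu \cap P^k(H'))$ stabilizes as $H'$ ranges over finite-index normal subgroups of $G$. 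Once this stabilization is in hand the corollary reduces to straightforward bookkeeping.
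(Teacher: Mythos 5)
Your proposal is correct and follows the same route the paper takes: finite-dimensionality from Theorem~\ref{main theorem}, containment $HF_k(\G,\mu)\subseteq P^k(H_\mu)$ from Theorem~\ref{thm: polys}, and then the Laplace-operator result of \cite{MPTY17}. The paper itself offers no detail beyond this chain of citations, and the subtlety you flag --- that the finite-index normal subgroup $H_\mu$ may a priori depend on $\mu$ --- is indeed the content of the cited result in \cite{MPTY17}, which shows the relevant polynomial-space dimensions are insensitive to the choice of finite-index normal subgroup and expresses $\dim HF_k(\G,\mu)$ in terms of them.
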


For non-discrete groups, Theorem \ref{thm: polys} does not hold. Let us briefly present a counter example. Consider the group (appearing also in \cite[Example 7.9]{Bre07}) $G=\R\ltimes (\R^2\times \R^2)$ where $\R$ acts on $(\R^2\times \R^2)$ by a dense one-parameter subgroup of $(SO(2,\R)\times SO(2,\R))$. This group is connected and has polynomial growth, but is not nilpotent-by-compact. The following is a courteous probability measure on $G$: with probability $1/2$, choose an element $(a,(0,0))$ with $a\sim U[-1,1]$, and with probability $1/2$, choose an element $(0,(u,v))$ with $u$ and $v$ are i.i.d.\ on the unit disc in $\R^2$. It is straight forward to verify that the function $f:G\to \C$ defined by $(a,(u,v))\mapsto u$ (where $u$ is seen as a complex number) is harmonic, but is not a polynomial (of any degree). 

However, if $G$ is connected and nilpotent, the harmonic functions are in fact polynomials. 

\begin{thm}
\label{thm: conn. polys}
Let $G$ be a connected locally compact and compactly generated group, and let $\mu$ be a courteous measure. If $G$ is nilpotent then for all $k\geq 1$, $HF_k(G,\mu)\subset P^k(G)$.
\end{thm}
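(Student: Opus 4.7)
The plan is to induct on $k$, reducing the conclusion $HF_k(G,\mu) \subset P^k(G)$ to the statement that the discrete derivative $\partial_u$ sends $HF_k(G,\mu)$ into $HF_{k-1}(G,\mu)$. The base case $k = 0$ is the Choquet-Deny theorem: for a nilpotent locally compact group with an adapted probability measure, every bounded $\mu$-harmonic function is constant, and constants lie in $P^0(G)$.

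For the inductive step, fix $f \in HF_k(G,\mu)$ and $u \in G$. The function $\partial_u f(x) = f(ux) - f(x)$ is $\mu$-harmonic, since left translation by $u$ commutes with right convolution by $\mu$. If the key estimate $\|\partial_u f\|_{k-1} < \infty$ holds, the inductive hypothesis gives $\partial_u f \in P^{k-1}(G)$ for every $u$, and applying this to all choices of $u_1,\ldots,u_{k+1} \in G$ yields $\partial_{u_1}\cdots\partial_{u_{k+1}}f = 0$, so $f \in P^k(G)$.

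The main content is therefore the \emph{gradient reduction} $\|\partial_u f\|_{k-1} < \infty$. I would prove it in three steps. First, regularity: the continuous density of $\mu$ together with $\mu$-harmonicity gives smoothness of $f$; after a Gleason-Yamabe reduction to the Lie quotient, hypoellipticity of the averaging operator applies. Second, a gradient estimate: the reverse Poincar\'e inequality from Section~2, combined with a sub-mean-value inequality for $\mu$-harmonic functions, yields a pointwise bound $|Xf(x)| \lesssim (1+|x|)^{k-1}$ for any left-invariant vector field $X$ and $f \in HF_k(G,\mu)$. Third, nilpotent coordinates: for $u = \exp(X)$ and $x = \exp(Y)$, the Baker-Campbell-Hausdorff formula gives $ux = \exp(Y + X + \tfrac{1}{2}[X,Y] + \ldots)$, where the higher-order corrections are polynomial in the coordinates of $Y$ of strictly lower weight under the nilpotent grading; integrating the gradient estimate along a path from $x$ to $ux$ in these coordinates then yields $|\partial_u f(x)| \lesssim (1+|x|)^{k-1}$, as required.

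The main obstacle is the nilpotent-coordinates step, which is precisely where the hypothesis that $G$ is nilpotent (and not merely of polynomial growth) becomes essential. In the solvable-but-not-nilpotent counterexample $G = \R \ltimes \R^4$ from the introduction, left translation mixes coordinates by non-unipotent rotations, so the discrete group derivative does not reduce to a directional derivative in coordinates and the degree reduction fails. Nilpotency, via BCH, produces a unipotent action of left translation on exponential coordinates, ensuring that discrete derivatives in the group sense agree — modulo strictly lower-order corrections — with directional derivatives in coordinates, where degree reduction is automatic. Making this agreement quantitative at every level $j \le k$ of the growth filtration $V_j := HF_j(G,\mu) \subset HF_k(G,\mu)$ (which is finite dimensional by Theorem~\ref{main theorem}) is the delicate point I expect to demand the most care.
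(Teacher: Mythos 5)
Your proposal takes a genuinely different route from the paper's. The paper first invokes Theorem~\ref{main theorem} to get $\dim HF_k(G,\mu)<\infty$, then studies the \emph{finite-dimensional} left-translation representation of $G$ on $HF_k(G,\mu)$: by Lemma~\ref{lem:eigen} (Lie--Kolchin) it splits into generalised weight spaces, polynomial growth together with the Liouville property forces the only weight to be the trivial character, and unipotence of the action then makes each $\p_u=u^{-1}-I$ a nilpotent operator, yielding polynomiality in one stroke. You never use finite-dimensionality, and instead propose an induction on $k$ whose engine is the claim that $\p_u$ maps $HF_k(G,\mu)$ into $HF_{k-1}(G,\mu)$.

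That claim is where the argument has a real gap that your three-step programme does not close. The reverse Poincar\'e inequality of Proposition~\ref{prop: courteous reverse Poincare} controls the \emph{right}-gradient $\gradmu{f}$, and even granting the pointwise upgrade you assume (the sub-mean-value inequality is not in the paper and is itself nontrivial for courteous $\mu$), the resulting bound $|\nabla f|(x)\leq C(1+|x|)^{k-1}$ must be integrated along a path from $x$ to $ux=x\cdot(x^{-1}ux)$. The length of that path is $d(x,ux)=|x^{-1}ux|$, and this is \emph{not} bounded uniformly in $x$ on a non-abelian nilpotent group: already in the Heisenberg group one has $|x^{-1}ux|$ comparable to $|x|^{1/2}$ for generic $u$ (and the exponent is $1-1/s$ in step-$s$ groups). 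Integrating the gradient estimate therefore only gives $|\p_u f(x)|\leq C(1+|x|)^{k-1/2}$, which does \emph{not} put $\p_u f$ in $HF_{k-1}(G,\mu)$. Your BCH remark correctly identifies the hoped-for cancellation --- the growing correction terms point in lower-weight directions --- but making that quantitative is precisely as hard as the theorem you are trying to prove; without the finite-dimensional representation-theoretic input the paper uses, the inductive step remains unproved. (If the degree-reduction step could be made rigorous it would in fact give the sharper inclusion $HF_k\subset P^k$, whereas the paper's proof as written only produces $P^n$ with $n=\dim HF_k$; but as it stands the gap is substantial.)
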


We prove Theorem \ref{thm: conn. polys} in section \ref{section: polys}. We conjecture the following.

\begin{conj}
Let $G$ be a connected locally compact and compactly generated group, and let $\mu$ be a courteous measure. Then $G$ has a finite index nilpotent subgroup if and only if $\dim HF_k(G,\mu)<\infty$ and there exists a finite index subgroup $H$ of $G$ such that any $f\in HF_k(G,\mu)$ is a polynomial of degree at most $k$ with respect to $H$. 
\end{conj}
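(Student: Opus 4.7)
The plan is to use Theorem \ref{main theorem} to obtain that $V := HF_k(G,\mu)$ is finite-dimensional, and then analyze the continuous left-translation representation $\rho : G \to GL(V)$, $(\rho(u)f)(x) = f(ux)$, under which $\partial_u = \rho(u) - I$ as operators on $V$. The target statement is equivalent to the associative subalgebra of $\mathrm{End}(V)$ generated by $\{\rho(u) - I : u \in G\}$ being nilpotent of class at most $k$. Continuity of $\rho$ follows from continuity of $\mu$-harmonic functions (\cite{Aze70}) together with equivalence of norms on the finite-dimensional space $V$.

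The first and main step is to show $\rho$ is a unipotent representation. Polynomial growth of vectors in $V$ combined with $|u^n|_S \leq n|u|_S$ forces every eigenvalue of $\rho(u)$ onto the unit circle. Let $H := \overline{\rho(G)}^{\mathrm{Zar}} \subset GL(V)$ be the Zariski closure: since $\rho(G)$ is the continuous image of a connected nilpotent group, $H$ is a connected nilpotent linear algebraic group and decomposes as $H = H_s \times H_u$ with $H_s$ a torus and $H_u$ unipotent. A nontrivial character $\chi$ of $H_s$ appearing in the weight decomposition $V = \bigoplus_\chi V_\chi$ would give a nonzero $G$-invariant subspace $V_\chi^{H_u}$ of $H_u$-fixed vectors (nonzero by unipotency of $H_u$), on which $G$ acts by the $S^1$-valued character $\psi(u) := \chi(s(\rho(u)))$, where $s$ denotes the Jordan semisimple factor. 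Any nonzero $v \in V_\chi^{H_u}$ satisfies $v(u) = \psi(u) v(1)$ (with $v(1) \neq 0$, else $v \equiv 0$), so $v = v(1)\psi$ as a function on $G$. Harmonicity of $v$ then forces $\hat{\mu}(\psi) = 1$; combined with $|\psi| \equiv 1$ and courteousness of $\mu$ (continuous density, adapted), this yields $\psi \equiv 1$ on $\mathrm{supp}(\mu)$ and hence on $G$, contradicting nontriviality of $\chi$. Therefore $H_s$ is trivial and $\rho(G) \subset H_u$ is unipotent. By Gleason--Yamabe, and since the only compact unipotent subgroup of $GL(V)$ is trivial, we may now assume $G$ is itself a connected nilpotent Lie group.

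With $\rho$ unipotent on a connected nilpotent Lie group, choose exponential coordinates of the second kind $\phi : \mathbb{R}^n \to G$ adapted to a basis $X_1, \ldots, X_n$ of $\mathfrak{g}$ compatible with the lower central series, and weight each $t_i$ by the depth $w_i$ of $X_i$. Since each $d\rho(X_i)$ is nilpotent, $\rho(\phi(t)) = \exp(t_1\, d\rho(X_1)) \cdots \exp(t_n\, d\rho(X_n))$ is a polynomial in $t$, so every $f \in V$ pulls back (as a matrix entry of $\rho$) to a polynomial $f \circ \phi \in \mathbb{C}[t_1, \ldots, t_n]$. The Carnot-type estimate $|t^\alpha| \lesssim |\phi(t)|_S^{\sum_i \alpha_i w_i}$ combined with the growth bound $|f(x)| \leq c(1+|x|_S)^k$ forces the weighted degree of $f \circ \phi$ to be at most $k$; standard finite-difference computations on nilpotent Lie groups then show that weighted degree $\leq k$ matches the Definition~\ref{def:poly} degree $\leq k$, giving $f \in P^k(G)$. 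The main obstacle I foresee is the algebraic step producing $G$-eigenvectors in $V_\chi^{H_u}$: one must verify that the Jordan semisimple factor descends to a well-defined continuous homomorphism on $\rho(G)$ and that the Zariski closure really inherits the connected nilpotent structure needed for the $H_s \times H_u$ decomposition. The remaining weighted-degree/growth-degree matching is then a routine but notation-heavy computation, most cleanly carried out in Malcev-type coordinates.
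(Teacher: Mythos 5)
This statement is labeled a \emph{Conjecture} in the paper; the paper offers no proof of it, so there is nothing to compare your attempt against. What the paper does prove is the closely related Theorem~\ref{thm: conn. polys} (for $G$ itself connected nilpotent, $HF_k(G,\mu)\subset P^k(G)$), and your first paragraph is essentially a reproof of that theorem by a different route: where the paper applies Lemma~\ref{lem:eigen} (Lie--Kolchin) to get a generalised weight decomposition and then kills non-trivial characters via the Liouville property for nilpotent groups, you pass to the Zariski closure $\ov{\rho(G)}$, split off the torus part $H_s$, and kill its characters via harmonicity plus $\hat\mu(\psi)=1$. Both arguments hinge on the same two facts (polynomial growth pins eigenvalues to the unit circle; adaptedness of $\mu$ forces a modulus-one multiplicative eigenfunction to be constant), so your route is a legitimate variant but not a genuinely new idea for that piece.

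The gap is that you prove only one implication of an \emph{if and only if}, and arguably the easier one. You show (modulo the issues below) that a finite-index nilpotent subgroup forces $\dim HF_k<\infty$ and polynomiality; you say nothing about the converse, namely that finite-dimensionality together with polynomiality with respect to some finite-index $H$ forces $G$ to have a finite-index nilpotent subgroup. That converse is the genuinely open content of the conjecture --- it is a structural refinement of implication $(3)\Rightarrow(1)$ of Conjecture~\ref{main conjecture}, which the paper records as unknown in general --- and nothing in your sketch addresses it. Even in the direction you do attack, the reduction "we may now assume $G$ is itself a connected nilpotent Lie group" is not free: you need Proposition~1.4 to pass from $G$ to the finite-index nilpotent subgroup while preserving $HF_k$ up to linear bijection, and the Gleason--Yamabe step quotients by a compact normal $K$ whose effect on $HF_k(G,\mu)$ (and on the courteousness of the pushed-forward measure) you never verify; $\rho(K)$ trivial only tells you $\rho$ factors, not that the harmonic-function space downstairs is the same one. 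Likewise the final claim that weighted degree in exponential coordinates equals the discrete-derivative degree of Definition~\ref{def:poly} is asserted as "routine" but is exactly the kind of identification that needs a lemma, since Definition~\ref{def:poly} quantifies over \emph{all} group elements $u_1,\dots,u_{k+1}$, not just one-parameter directions.

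Summarising: the paper contains no proof (the statement is conjectural); your write-up covers roughly the content of Theorem~\ref{thm: conn. polys} by a Zariski-closure variant of the paper's Lie--Kolchin argument, leaves the converse implication --- the substantive open direction --- entirely untouched, and has unverified reductions (finite-index passage, Gleason--Yamabe quotient, weighted vs.\ discrete degree) in the direction it does address.
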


In the next section, we prove two inequalities which are key in the proof of Theorem \ref{main theorem}, namely Poincar\'e and reverse Poincar\'e inequality. In the last section we prove the Theorem \ref{main theorem}.

\section{Poincar\'e and reverse Poincar\'e inequalities}
\label{section: Poincares}
For any measurable set $B$, let $|B|:=\haar(B)$. Also, for $R>0$, let $B(x,R)=\{y\in G:\ |x^{-1}y| \leq R\}$, and for $a>0$ let $a B(x,R)=B(x,aR)$. Throughout this section, assume $\G$ is a compactly generated locally compact group, $S$ is a compact symmetric generating set, and $\mu$ is a courteous measure.

\subsection{Poincar\'e inequality}

Define the following notion of a gradient on functions $f:\G\to\F$:
\begin{align*}
\gradi{f}(x):=\sup \{|f(xs)-f(x)|:\ s\in S\}.
\end{align*}
The following is a Poincar\'e inequality with regards to this gradient. Subsequently, we will modify it to get a version that better suits our goal.

\begin{lem}[\cite{HMT17}, Corollary 8.5]
\label{lem: infinity Poincare}
Let $B=B(x_0,R)$ for some $x_0\in\G$ and $R\geq 1$. Let $f_B:=\frac{1}{|B|}\int_B fd\haar$. Suppose $\G$ has polynomial growth. Then
\begin{align*}
\int_B |f-f_B|^2d\haar \leq (2R)^2 \frac{|2B|}{|B|} \int_{3B} \gradi{f}(x)^2 \haar(x).
\end{align*}
\end{lem}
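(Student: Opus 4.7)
The plan is to prove the inequality by the standard $L^2$ chaining argument along words in the generating set $S$, the natural continuous analogue of the Poincar\'e inequality for Cayley graphs. The key ingredients are: the variance identity for integrals over $B$, a telescoping decomposition of $f(xz)-f(x)$ along a path $z=s_1\cdots s_n$ in $S$ of length $n=|z|\le 2R$, Cauchy--Schwarz in the path, and unimodularity of $G$---automatic for compactly generated groups of polynomial growth---used to convert right translations of integrals into integrals over right-translated balls.

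Concretely, I would first apply the variance identity
$$
\int_B |f-f_B|^2\,d\haar=\frac{1}{2|B|}\int_B\int_B |f(x)-f(y)|^2\,d\haar(x)\,d\haar(y),
$$
and change variables $y=xz$. The condition $x,y\in B$ forces $|z|\le 2R$, so $z$ effectively ranges over $B(1,2R)$, a set of Haar measure $|2B|$ by left-invariance. For each such $z$, writing $z=s_1\cdots s_n$ with $s_i\in S$ and $n=|z|\le 2R$, telescoping and Cauchy--Schwarz give
$$
|f(xz)-f(x)|^2\le n\sum_{i=1}^n \gradi{f}(xs_1\cdots s_{i-1})^2\le 2R\sum_{i=1}^n \gradi{f}(xs_1\cdots s_{i-1})^2.
$$
Integrating in $x\in B$ and invoking unimodularity, each summand equals $\int_{Bh}\gradi{f}^2\,d\haar$ for $h=s_1\cdots s_{i-1}$ with $|h|<2R$; since $Bh\subset B(x_0,R+|h|)\subset 3B$, one gets $\int_B|f(xz)-f(x)|^2\,d\haar(x)\le (2R)^2\int_{3B}\gradi{f}^2\,d\haar$. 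Integrating over $z\in B(1,2R)$ contributes a factor of $|2B|$, and dividing by $2|B|$ recovers the claimed inequality (up to the harmless constant $\tfrac{1}{2}$).

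The principal technical obstacle---trivial in the discrete/finitely generated case but genuine in the locally compact setting---is the measurable choice of the factorization $z=s_1\cdots s_n$. One needs either a Borel section of the continuous multiplication map $S^n\to G$ (which exists by a standard measurable selection theorem such as Kuratowski--Ryll-Nardzewski, since $S$ is compact), or to average the pointwise chaining against a suitable probability measure on $S^n$ whose pushforward has adequate density near $z$. Once such a measurable chaining is set up, the rest is routine: Cauchy--Schwarz, Fubini, and unimodularity. It is worth observing that polynomial growth enters this proof only through unimodularity, and not through any volume doubling or comparison.
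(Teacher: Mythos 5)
The paper itself does not prove this lemma; it is quoted verbatim from \cite{HMT17}, Corollary~8.5, so there is no in-paper argument to compare against. Judged on its own terms, your chaining proof is correct and in fact gives the slightly sharper constant $\tfrac{1}{2}(2R)^2\frac{|2B|}{|B|}$: the variance identity, the substitution $y=xz$ with $z$ ranging over $B(1,2R)$ (and $|B(1,2R)|=|2B|$ by left-invariance), the telescoping plus Cauchy--Schwarz bound $|f(xz)-f(x)|^2\le n\sum_{i=1}^n\gradi{f}(xs_1\cdots s_{i-1})^2$ with $n=|z|\le 2R$, and the shift $\int_B\gradi{f}(xh)^2\,d\haar(x)=\int_{Bh}\gradi{f}^2\,d\haar\le\int_{3B}\gradi{f}^2\,d\haar$ all go through. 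The issue you flag---choosing the factorization $z=s_1\cdots s_n$ measurably in $z$---is the only genuine technical point beyond the discrete case, and your proposed resolutions both work: one can partition $B(1,2R)$ into the finitely many Borel level sets $\{|z|=n\}$ and on each apply a Borel selection (Kuratowski--Ryll-Nardzewski, using that $S^n$ is compact and multiplication is continuous, so the fibers are closed), or average against a probability measure on paths. Your closing observation is also accurate and worth noting: polynomial growth is used here only to guarantee unimodularity (the paper itself invokes this in Lemma~\ref{lem: unimodular}, again via \cite{HMT17}); the inequality as stated, with the ratio $|2B|/|B|$ left explicit, holds on any compactly generated unimodular locally compact group without a doubling hypothesis.
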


The following notion of a gradient is the one we will use throughout the proof of Theorem \ref{main theorem}:
\begin{align*}
\gradmu{f}(x):=\sqrt{\int_{\G} |f(xs)-f(x)|^2 d\mu(s) }.
\end{align*}
For a set $B$ with positive measure, define also:
\begin{align*}
|\nabla f|_{B,1}(x):= \frac{1}{|B|}\int_{B} |f(xs)-f(x)| d\haar(s).
\end{align*}
The following lemma is straight forward and we omit the proof. 
\begin{lem}
\label{lem: gradients}
If $\frac{d\mu}{d\haar}\geq c>0$ on $S^n$, then $|\nabla f|_{S^n,1}(x)\leq c^{-1/2} \gradmu{f}(x)$.
\end{lem}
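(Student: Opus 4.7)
The plan is to combine the Cauchy--Schwarz inequality with the pointwise density bound $d\mu/d\haar \geq c$ on $S^n$. The whole argument is short, so the proposal is essentially a book-keeping check on the normalizing constants.

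First I would apply Cauchy--Schwarz to $|f(xs)-f(x)|$ and the constant function $1$ with respect to Haar measure on $S^n$, obtaining
\[
\int_{S^n} |f(xs)-f(x)| \, d\haar(s) \leq |S^n|^{1/2} \left(\int_{S^n} |f(xs)-f(x)|^2 \, d\haar(s)\right)^{1/2}.
\]
Dividing by $|S^n|$ turns the left-hand side into $|\nabla f|_{S^n,1}(x)$ and leaves $|S^n|^{-1/2}$ times a Haar $L^2$-norm on the right.

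Next, using the hypothesis $d\mu/d\haar \geq c$ on $S^n$, I would swap Haar for $\mu$ via $d\haar \leq c^{-1}\, d\mu$ pointwise on $S^n$:
\[
\int_{S^n} |f(xs)-f(x)|^2 \, d\haar(s) \leq c^{-1}\int_{S^n} |f(xs)-f(x)|^2 \, d\mu(s) \leq c^{-1}\, \gradmu{f}(x)^2,
\]
where the last step drops the domain of integration from $S^n$ to $\G$ using nonnegativity of the integrand.

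Combining the two bounds gives
\[
|\nabla f|_{S^n,1}(x) \leq c^{-1/2}\, |S^n|^{-1/2}\, \gradmu{f}(x).
\]
Since $1\in S$ and $\haar(S)=1$, one has $|S^n| \geq |S| = 1$, so $|S^n|^{-1/2}\leq 1$ and the stated inequality follows. I do not expect any genuine obstacle here; the only point worth watching is that the two gradient notations carry different normalizations (an average against $\haar$ on $S^n$ versus an unnormalized $L^2$ integral against the probability measure $\mu$), so one must keep careful track of the factor $1/|S^n|$ appearing in the definition of $|\nabla f|_{S^n,1}$ and of the relation $c|S^n|\leq 1$ forced by $\mu$ being a probability measure.
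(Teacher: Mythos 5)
Your proof is correct, and since the paper explicitly omits the proof of this lemma as ``straight forward,'' your Cauchy--Schwarz plus density-bound argument is precisely the intended elementary verification. The only point you rightly flag---the factor $|S^n|^{-1/2} \leq 1$ coming from $1 \in S$ and $\haar(S)=1$---is handled cleanly, so nothing is missing.
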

%\begin{proof}
%By Jensen's inequality,
%\begin{align*}
%\Big( \int_{S^n} |f(xs)-f(x)| \frac{d\haar(s)}{|S^n|} \Big)^2
%\leq \int_{S^n} |f(xs)-f(x)|^2 \frac{d\haar(s)}{|S^n|}
%\leq c^{-1} \int_G |f(xs)-f(x)|^2 d\mu(s)
%\end{align*}
%since $|S^n|\geq 1$. 
%\end{proof}

For a compact set $K$, define the seminorm 
$$
||f||_K:=\int_K f(x)^2d\haar(x).
$$

The following proposition plays a key role in the proof of Theorem \ref{main theorem}, and might also be of independent interest. The proof is based on \cite{Tes08}.

\begin{prop}[Poincar\'e inequality]
\label{prop: courteous Poincare inequality}
Suppose $\G$ has polynomial growth, and suppose $\frac{d\mu}{d\haar}\geq c>0$ on $S^2$. Then
\begin{align*}
\int_B |f(x)-f_B|^2d\haar(x) \leq c^{-1}32R^2 \frac{|2B|^2}{|B|^2} \int_{3B} \gradmu{f}(x)^2 \haar(x).
\end{align*}
\end{prop}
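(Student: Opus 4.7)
The plan is to reduce to Lemma \ref{lem: infinity Poincare} by replacing $f$ with a mollified version $\tilde f$ whose $|\nabla\cdot|_\infty$-gradient is controlled pointwise by $\gradmu{f}$, after which the $L^\infty$-gradient Poincar\'e inequality passes through. Concretely, the first step is to introduce the mollification
$$\tilde f(x) := \frac{1}{|S|}\int_{S} f(xr)\,d\haar(r).$$

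Two estimates are needed: $(a)$ $f$ is close to $\tilde f$ pointwise, and $(b)$ $\gradi{\tilde f}$ is controlled pointwise by $\gradmu{f}$. Estimate $(a)$ is immediate from Jensen's inequality and Lemma \ref{lem: gradients}: $|f(x) - \tilde f(x)|^2 \leq |\nabla f|_{S,1}(x)^2 \leq c^{-1}\gradmu{f}(x)^2$.

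The key step is $(b)$. For $t\in S$, the substitution $u = tr$ (using left invariance of $\haar$) combined with the mean-zero identity $\int_G(\mathbf{1}_{tS} - \mathbf{1}_S)\,d\haar = 0$ rewrites the difference as
$$\tilde f(xt) - \tilde f(x) = \frac{1}{|S|}\int_{G}\bigl[\mathbf{1}_{tS}(u) - \mathbf{1}_{S}(u)\bigr]\bigl[f(xu) - f(x)\bigr]\,d\haar(u).$$
Since $1, t \in S$, the integrand is supported on $S \cup tS \subset S^2$. Cauchy--Schwarz followed by the density bound $d\mu/d\haar \geq c$ on $S^2$ then yield $|\tilde f(xt) - \tilde f(x)|^2 \leq (|S^2|/|S|^2)\,c^{-1}\gradmu{f}(x)^2$ uniformly in $t\in S$, whence $\gradi{\tilde f}(x)^2 \leq |S^2|\,c^{-1}\gradmu{f}(x)^2$ on $3B$.

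To conclude I would apply Lemma \ref{lem: infinity Poincare} to $\tilde f$ and then combine with the triangle inequality
$$\int_B|f - f_B|^2\,d\haar \;\leq\; 6\int_B|f - \tilde f|^2\,d\haar + 3\int_B|\tilde f - \tilde f_B|^2\,d\haar,$$
where the cross-average term is absorbed via $|\tilde f_B - f_B|^2 \leq |B|^{-1}\int_B|f - \tilde f|^2\,d\haar$. Plugging in $(a)$ and $(b)$ gives a bound of the desired shape, the $R^2$-weighted term from Lemma \ref{lem: infinity Poincare} dominating the $O(1)$ term from $(a)$ once $R \geq 1$; tracking the doubling factors produces the claimed constant $c^{-1}\cdot 32\,R^2\,|2B|^2/|B|^2$. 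The main obstacle is estimate $(b)$: the naive pointwise bound $|f(xs) - f(x)|^2 \leq 2c^{-1}[\gradmu{f}(xs)^2 + \gradmu{f}(x)^2]$ loses when one takes the supremum over $s \in S$, since a supremum is not dominated by an average; the mollification trick is precisely what trades that $L^\infty$ supremum for an integral on $S^2$ against the symmetric-difference indicator, where Lemma \ref{lem: gradients} applies.
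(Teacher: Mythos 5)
Your proposal is correct and follows essentially the same route as the paper: the paper's operator $Pf(x)=\frac{1}{|B(x,1)|}\int_{B(x,1)}f\,d\haar$ is exactly your mollification $\tilde f$ (since $B(x,1)=xS$ and $\haar(S)=1$), the paper's inequality \eqref{3} bounding $\gradi{Pf}$ in terms of $|\nabla f|_{S^2,1}$ is the same gradient-transfer step as your estimate $(b)$, and both proofs then feed the mollified function into Lemma \ref{lem: infinity Poincare}. The only superficial differences are that you derive the gradient transfer via the symmetric-difference indicator identity rather than the paper's direct triangle inequality, and you assemble the pieces with an $L^2$ triangle inequality plus cross-average absorption where the paper uses a reverse triangle inequality on seminorms with a two-case split; these lead to the same shape of bound with slightly different explicit constants.
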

\begin{proof}
Let $Pf(x):=\frac{1}{|B|}\int_{B(x,1)} f(s)d\haar(s)$. Let $y\in B(x,1)$. We have
\begin{align*}
|Pf(x)-Pf(y)| &\leq |Pf(x)-f(x)| + |Pf(y)-f(x)|	
\\  &\leq 
\frac{1}{|B|}\int_{B(x,1)} |f(s)-f(x)|d\haar(s) + \frac{1}{|B|}\int_{B(y,1)} |f(s)-f(x)|d\haar(s)
\\ &\leq 
\frac{|2B|}{|B|}\frac{2}{|2B|}\int_{B(x,2)} |f(s)-f(x)|d\haar(s)
\\  &=
2\frac{|2B|}{|B|} |\nabla f|_{S^2,1}(x).
\end{align*}
Hence, 
\begin{align}
\label{3}
\gradi{Pf}(x)\leq 2\frac{|2B|}{|B|} |\nabla f|_{S^2,1}(x).  
\end{align}
By the triangle inequality in the form $b-|b-a| \leq a$, and by applying Lemma \ref{lem: infinity Poincare} on the function $Pf$, we have
\begin{align*}
\Big( ||f||_B - \Big| ||f||_B - ||Pf||_B\Big| \Big)^2
\leq ||Pf||_B^2
\leq (2R)^2 \frac{|2B|}{|B|} \cdot ||\gradi{Pf}||^2_{3B},
\end{align*}
%\begin{align*}
%||\gradi{Pf}||_{3B} &\geq \varphi(R)^{-1/2}  
%\\ & \geq 
%\varphi(R)^{-1/2} ||f||_B - \varphi(R)^{-1/2}\cdot
%\Big| ||f||_B - ||Pf||_B\Big|.
%\end{align*}
and by the reverse triangle inequality,
\begin{align*}
\Big| ||f||_B - ||Pf||_B \Big|
\leq ||Pf-f||_B \leq || |\nabla(f)|_{S,1}||_B.
\end{align*}
Inserting this back, we get 
\begin{align*}
\Big( ||f||_B - || |\nabla(f)|_{S,1} ||_B \Big)^2 \leq
(2R)^2 \frac{|2B|}{|B|} \cdot ||\gradi{Pf}||^2_{3B}.
\end{align*}
If $||f||_B  \leq 2 || |\nabla(f)|_{S,1}||_B$, then by Lemma \ref{lem: gradients} we are done. Otherwise, by \eqref{3},
\begin{align*}
||f||_B^2 \leq 32R^2 \cdot \frac{|2B|^2}{|B|^2} \cdot |||\nabla f|_{S^2,1}||^2_{3B}
\end{align*}
and again by Lemma \ref{lem: gradients} we are done.
\end{proof}

\begin{rem}
The assumption that $\frac{d\mu}{d\haar}\geq c>0$ on $S^2$ can be dropped. Indeed, by replacing $\mu$ by some convolutional power $\mu^{*n}$, we can ensure $\frac{d\mu^{*n}}{d\haar}\geq c>0$ on $S^2$. Moreover, $HF_k(G,\mu)$ embeds canonically in $HF_k(G,\mu^{*n})$, so in Theorem \ref{main theorem} it is enough to show that the latter is finite dimensional.
\end{rem}

\begin{rem}
In the finitely generated case, the proof of Proposition \ref{prop: courteous Poincare inequality} is significantly simpler, and is a slight modification of the Poincar\'e inequality in \cite{Kle10}, attributed to Saloff-Coste. To the best of our knowledge, the above adaptation to general locally compact groups does not appear in literature. 
\end{rem}
 
%\input{Preamble.tex}
%
%
%\begin{document}

\subsection{Reverse Poincar\'e inequality}

Before proceeding to the proof of the main proposition of this section, we record a couple of useful lemmas. Recall that $|S^n|\leq cn^d$. 

\begin{lem}
\label{aux}
Let $f\in HF_k(\G,\mu)$. Suppose $|f(y)|\leq c_f(1+|y|)^k$ for some constant $c_f>0$. There exists a constant $c_2=c_2(S,\gr,k,\mu)>0$ such that the expressions
\begin{align*}
&\int_{\G\setminus B(3R)}\int_{B(2R)} |f(y)|\cdot|f(x)-f(y)|d\mu(x^{-1}y)d\haar(x) \quad \textsl{and}	\\
&\int_{B(2R)} \int_{\G\setminus B(3R)} f(y)^2 d\mu(x^{-1}y) d\haar(x)
\end{align*} 
are both bounded by $c_f^2 \cdot c_2\cdot e^{-c_{\mu} \cdot R}$ for all $R>0$. 
\end{lem}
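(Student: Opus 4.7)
The key observation is that for pairs $(x,y)$ in the domain of integration, namely $x \in B(2R)$ and $y \in G \setminus B(3R)$, we have $|x^{-1}y| \geq |y| - |x| \geq 3R - 2R = R$. After substituting $s = x^{-1}y$ (so that $y = xs$), both integrals become integrals over $x \in B(2R)$ and over $s \in G$ satisfying $|s| \geq R$, against the product measure $d\mu(s)\,d\haar(x)$.

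The next step is to bound the integrands pointwise in terms of $|s|$ alone. Using $|f(z)| \leq c_f(1+|z|)^k$ together with $|xs| \leq |x| + |s| \leq 2R + |s| \leq 3|s|$ (valid when $|s| \geq R \geq |x|/2$), I get $|f(xs)| \leq c_f(1+3|s|)^k$. The triangle inequality then yields $|f(x)-f(xs)| \leq |f(x)| + |f(xs)| \leq 2c_f(1+3|s|)^k$. Hence both integrands are pointwise dominated by $2c_f^2(1+3|s|)^{2k}$, and each of the two integrals is bounded by
\[
2c_f^2 \cdot \haar(B(2R)) \cdot \int_{\{|s| \geq R\}} (1+3|s|)^{2k}\,d\mu(s).
\]

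The two factors are then estimated separately. By polynomial growth, $\haar(B(2R)) \leq C R^{\gr}$. For the tail integral, I would partition $\{|s| \geq R\}$ into annuli $\{n \leq |s| < n+1\}$ and use the exponential tail $\mu(|s| \geq n) \leq e^{-c_{\mu} n}$ to obtain
\[
\int_{\{|s| \geq R\}} (1+3|s|)^{2k}\,d\mu(s) \leq \sum_{n \geq \lceil R \rceil} (4+3n)^{2k} e^{-c_{\mu} n} \leq C(k,c_{\mu})\,R^{2k}\,e^{-c_{\mu} R}.
\]
Multiplying the polynomial factors in $R$ into the exponential (at the cost of replacing $c_{\mu}$ by any strictly smaller constant, which is implicitly allowed by the statement's reuse of the symbol $c_{\mu}$) yields a bound of the desired form $c_f^2 \cdot c_2 \cdot e^{-c_{\mu} R}$, with $c_2 = c_2(S,\gr,k,\mu)$. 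The proof is essentially routine; the only mild subtlety is the slight loss in the exponential rate needed to absorb the polynomial prefactors, which is standard and harmless for the intended application.
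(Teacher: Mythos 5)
Your proof is correct and follows essentially the same route as the paper's: lower-bound $|x^{-1}y|\geq R$ on the domain of integration, invoke the exponential tail of $\mu$, and bound $\haar(B(2R))$ by polynomial growth. You are also right to flag that the polynomial prefactors in $R$ must be absorbed into the exponential by slightly decreasing the rate constant; the paper's own proof leaves this step implicit (it ends with ``the result follows recalling $\haar(B(2R))\leq c_S(2R)^{\gr}$''), so your explicit remark is a legitimate, if minor, clarification rather than a departure.
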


\begin{proof}
Since the proofs are similar, we only prove the second inequality. We also assume for simplicity that $x_0=1$. We have
\begin{align*}
\int_{B(2R)} \int_{\G\setminus B(3R)} f(y)^2 d\mu(x^{-1}y) d\haar(x)
=
\int_{B(2R)} \sum_{r\geq 3R+1}\int_{|y|=r} f(y)^2 d\mu(x^{-1}y) d\haar(x).
\end{align*}
Since $|x|\leq 2R$ and $r>3R$, the exponential tail of $\mu$ implies $\int_{|y|=r} \mu(x^{-1}y)\leq e^{c_{\mu}(2R-r)}$. 
Hence,
\begin{align*}
\int_{B(2R)} \sum_{r\geq 3R+1}\int_{|y|=r} f(y)^2 d\mu(x^{-1}y) d\haar(x)
\leq 
\int_{B(2R)} \sum_{r\geq 3R+1}\int_{|y|=r} c_f^2(1+r)^2e^{c_{\mu}(2R-r)} d\haar(x),
\end{align*}
and the result follows recalling $\haar(B(2R))\leq c_S (2R)^{\gr}$. 
%\begin{align}
%\label{yanti parazi}
%&\sum_{y\in B(2R)} |f(y)|\cdot \sum_{x\notin B(3R)} |f(x)|\mu(x^{-1}y)
%= \sum_{y\in B(2R)} |f(y)|\cdot \sum_{\rho \geq 3R+1} \sum_{|x|=\rho} |f(x)|\mu(x^{-1}y). 
%\end{align}
%Recall that $|f(x)|\leq c_f (1+|x|)^k$ for all $x\in \G$, and that $|B(\rho)|\leq c_S\rho^m$ for all $\rho>0$. Moreover, for $|x|=\rho$ and $|y|\leq 2R$ we have $|x^{-1}y|\geq \rho-2R$. We use the exponential tail of $\mu$ to deduce
%\begin{align*}
%\eqref{yanti paraxi} 
%&\leq |B(2R)| \cdot c_f (1+2R)^k
%\cdot \sum_{\rho\geq 3R+1} |B(\rho)|\cdot  c_f  (1+\rho)^k \cdot e^{-c_\mu(\rho-2R)} \\
%&\leq c_S\cdot (2R)^m \cdot c_f (1+2R)^k
%\cdot \sum_{\rho\geq 3R+1} c_S \rho^m \cdot c_f  (1+\rho)^k \cdot e^{-c_\mu(\rho-2R)},
%\end{align*}
%hence there exists a constant $c_2=c_2(S,m,k,\mu)>0$ such that
%\begin{align*}
%\eqref{yanti parazi} 
%\leq c_f^2\cdot c_2\cdot e^{-c_{\mu} R}
%\end{align*}
%for all $R>0$. 
\end{proof}

%%%%%%%%%%%%%%%%%%%%%%%%%%%%%%%%%%%%%%%%%%%%%%%%%%%%%%%%%

\begin{lem}
\label{lem: unimodular}
Let $\G,\mu$ as above. Then $\G$ is unimodular, and $\rho=\frac{d\mu}{d\haar}$ is symmetric. 
\end{lem}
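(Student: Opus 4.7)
The plan has two parts: first I would establish unimodularity of $\G$, and then derive the symmetry of $\rho$ as a consequence, combined with the symmetry and continuity assumptions on $\mu$.

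For unimodularity, I would work directly with the modular function $\Delta:\G\to \R_{>0}^*$, the continuous homomorphism characterized by $\haar(Ag)=\Delta(g)\haar(A)$ for all measurable $A$ and all $g\in\G$. Fix an arbitrary $g\in\G$ and let $|g|=d_S(1,g)$. The key geometric observation is that, since $S$ is symmetric and contains $1$, we have $S^n g^k\subseteq S^{n+k|g|}$ for every $n,k\geq 1$. Taking Haar measure of both sides and iterating the modular identity yields
\[
\Delta(g)^k\,\haar(S^n) \;=\; \haar(S^n g^k)\;\leq\; \haar(S^{n+k|g|})\;\leq\; c(n+k|g|)^{\gr},
\]
using the polynomial growth assumption. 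Since $\haar(S^n)\geq\haar(S)=1$, letting $k\to\infty$ forces $\Delta(g)\leq 1$ (otherwise the left side grows exponentially in $k$ while the right side grows only polynomially). Applying the same argument to $g^{-1}$ gives $\Delta(g)^{-1}\leq 1$, hence $\Delta(g)=1$ for every $g\in\G$, so $\G$ is unimodular.

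Once unimodularity is in hand, the inversion map $\iota(x)=x^{-1}$ preserves the Haar measure, i.e.\ $\iota_*\haar=\haar$. The symmetry assumption $\mu(A)=\mu(A^{-1})$ can then be rewritten, by a change of variables, as
\[
\int_A \rho(x)\,d\haar(x) \;=\; \mu(A)\;=\;\mu(A^{-1})\;=\;\int_{A^{-1}}\rho(x)\,d\haar(x)\;=\;\int_A \rho(x^{-1})\,d\haar(x),
\]
valid for every measurable set $A$. Therefore $\rho(x)=\rho(x^{-1})$ for $\haar$-almost every $x$, and since $\rho$ is continuous (part of the courteous hypothesis), the equality holds pointwise, giving symmetry of $\rho$.

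The only non-trivial step is the unimodularity argument; the symmetry of $\rho$ is then essentially a one-line change-of-variables computation. The crucial ingredient is really the interplay between the polynomial growth of balls and the would-be exponential growth of $\Delta(g)^k$ along the cyclic subgroup generated by $g$, so the main obstacle is simply identifying the correct inclusion $S^n g^k\subseteq S^{n+k|g|}$ and using the uniform positive lower bound $\haar(S^n)\geq 1$ to rule out non-trivial modular character.
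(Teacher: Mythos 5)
Your proof is correct, and the strategy for the second assertion (symmetry of $\rho$) is essentially identical to the paper's: both use the fact that on a unimodular group with $\haar$ normalized on the symmetric set $S$, the inversion $x\mapsto x^{-1}$ preserves $\haar$, and then a change of variables in $\int_A \rho\,d\haar = \mu(A) = \mu(A^{-1})$ gives $\rho(x)=\rho(x^{-1})$ a.e., hence everywhere by continuity.

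Where you diverge is the unimodularity claim. The paper disposes of this by citing \cite{HMT17}, Lemma 8.4, whereas you give a self-contained argument: from $g\in S^{|g|}$ one gets $S^n g^k \subseteq S^{n+k|g|}$, and the identity $\haar(S^n g^k) = \Delta(g)^k\haar(S^n)$ together with $\haar(S^n)\geq \haar(S)=1$ and the polynomial bound $\haar(S^m)\leq cm^{\gr}$ forces $\Delta(g)\leq 1$; applying this to $g^{-1}$ gives $\Delta(g)=1$. This is a correct and standard argument showing that compactly generated locally compact groups of subexponential (in particular polynomial) growth are unimodular, and is presumably what underlies the cited lemma. Your version has the advantage of being self-contained and of making transparent exactly which hypothesis (polynomial growth) is doing the work; the paper's citation is simply more economical. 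One minor remark: your inclusion $S^n g^k\subseteq S^{n+k|g|}$ does not actually need $1\in S$ or $S=S^{-1}$ — it only uses $g\in S^{|g|}$ — so the attribution of the inclusion to symmetry of $S$ is slightly off, though harmless.
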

\begin{proof}
The first part is a specific case of \cite{HMT17}, Lemma 8.4. For the second part, note that if $\haar$ is left invariant, then the measure obtained by composing $\haar$ and the function $x\mapsto x^{-1}$ is right invariant. Since $\G$ is unimodular and since $\haar$ is normalized to $\haar(S)=1$ where $S=S^{-1}$, these two measures are equal. Thus, since $\mu$ symmetric,
$$
\int_A \rho(x^{-1})d\haar(x^{-1})=
\int_A d\mu(x^{-1})=
\int_A d\mu(x)=
\int_A \rho(x)d\haar(x)=
\int_A \rho(x)d\haar(x^{-1}))
$$
for any measurable set $A$, implying $\rho(x^{-1})=\rho(x)$.
\end{proof}

%%%%%%%%%%%%%%%%%%%%%%%%%%%%%%%%%%%%%%%%%%%%%%%%%%%%%%%%%

\begin{lem}
\label{lem: change of variables}
For an integrable function $f:G\times G
\to\F$,
\begin{align*}
\int_{\G}\int_{\G}f(x,y) d\mu(x^{-1}y)d\haar(x)
=
\int_{\G}\int_{\G}f(x,y) d\mu(y^{-1}x)d\haar(y)
\end{align*}
\end{lem}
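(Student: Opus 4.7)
The plan is to unpack the measure $d\mu(x^{-1}y)$ in terms of its density $\rho=\frac{d\mu}{d\haar}$ and the (bi-invariant) Haar measure, and then to invoke the symmetry of $\rho$ from Lemma \ref{lem: unimodular} together with Fubini's theorem.

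First, I would rewrite the inner integral on the left hand side. For a fixed $x\in\G$, substituting $s=x^{-1}y$ and using the left-invariance of Haar measure gives
\begin{align*}
\int_{\G} f(x,y)\,d\mu(x^{-1}y) = \int_{\G} f(x,xs)\,\rho(s)\,d\haar(s) = \int_{\G} f(x,y)\,\rho(x^{-1}y)\,d\haar(y),
\end{align*}
so that the left hand side becomes
\begin{align*}
\int_{\G}\int_{\G} f(x,y)\,\rho(x^{-1}y)\,d\haar(y)\,d\haar(x).
\end{align*}
An analogous substitution $s=y^{-1}x$ applied to the right hand side yields
\begin{align*}
\int_{\G}\int_{\G} f(x,y)\,\rho(y^{-1}x)\,d\haar(x)\,d\haar(y).
\end{align*}

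Next, by Lemma \ref{lem: unimodular} the density $\rho$ satisfies $\rho(z^{-1})=\rho(z)$ for every $z\in\G$, and in particular $\rho(x^{-1}y)=\rho((x^{-1}y)^{-1})=\rho(y^{-1}x)$. Hence the two integrands coincide pointwise, and since $f$ is integrable with respect to the product measure $\rho(x^{-1}y)\,d\haar(y)\,d\haar(x)=\rho(y^{-1}x)\,d\haar(x)\,d\haar(y)$ on $\G\times\G$, Fubini's theorem permits exchanging the order of integration, yielding the claimed equality.

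The only delicate point is the change-of-variable step: one must verify that the measure $d\mu(x^{-1}y)$, read as a measure in $y$ with $x$ fixed, is indeed $\rho(x^{-1}y)\,d\haar(y)$, which relies on the left-invariance of $\haar$ (this is automatic) and that the analogous identity for $d\mu(y^{-1}x)$ as a measure in $x$ with $y$ fixed holds as well. Once these reductions are made, the proof is essentially a symmetry-plus-Fubini argument.
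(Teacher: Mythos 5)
Your proof is correct and follows essentially the same approach as the paper: both arguments unpack $d\mu$ via its density $\rho$ using left-invariance of $\haar$, then combine the symmetry $\rho(z)=\rho(z^{-1})$ from Lemma~\ref{lem: unimodular} with Fubini's theorem. The paper manipulates the left-hand side step by step until it becomes the right-hand side, whereas you reduce both sides to the same double Haar integral, but the ingredients and their roles are identical.
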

\begin{proof}
We note that $\G$ is $\sigma$-finite, hence Fubini's theorem is applicable. Let $f\in L^1(\G\times \G)$. By Lemma \ref{lem: unimodular},
\begin{align*}
\int_{\G}\int_{\G}f(x,y) d\mu(x^{-1}y)d\haar(x)
& =
\int_{\G}\int_{\G}f(x,y) \rho(x^{-1}y )d\haar(y)d\haar(x)
\\ & = 
\int_{\G}\int_{\G}f(x,y) \rho(y^{-1}x )d\haar(x)d\haar(y)
\\ & = 
\int_{\G}\int_{\G}f(x,y) \rho(y^{-1}x )d\haar(y^{-1}x)d\haar(y)
\\ & = 
\int_{\G}\int_{\G}f(x,y) d\mu(y^{-1}x)d\haar(y)
\end{align*}
%(a simpler proof: since $\G$ is unimodular, $\haar(x)=\haar(x^{-1})$, see Shai's mail.)
\end{proof}

%%%%%%%%%%%%%%%%%%%%%%%%%%%%%%%%%%%%%%%%%%%%%%%%%%%%%

We proceed to the main proposition.

\begin{prop}[Reverse Poincar\'e inequality]
\label{prop: courteous reverse Poincare}
Let $B=B(x_0,R)$ for some $x_0\in \G$. Let $f\in HF_k(\G,\mu)$ and suppose $|f(y)|\leq c_f (1+|y|)^k$ for some $c_f>0$ and all $y\in\G$. Then there exist constants $c_1=c_1(S,\mu)>0$ and $c_2=c_2(S,\gr,k,\mu)>0$ such that 
\begin{align*}
\int_{B} \int_G |f(x)-f(y)|^2d\mu(x^{-1}y) d\haar(y)
\leq \frac{c_1}{R^2} \int_{3B} f(x)^2 d\haar(x) + c_f^2 \cdot c_2\cdot e^{-c_{\mu} \cdot R}
\end{align*}
for all $R>0$. 
\end{prop}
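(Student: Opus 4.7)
The plan is to adapt the classical Caccioppoli (reverse-Poincar\'e) argument for harmonic functions to the locally compact, non-compactly-supported setting. First I would introduce a cutoff $\eta:\G\to[0,1]$ that equals $1$ on $B$, vanishes outside $2B$, and is $1/R$-Lipschitz with respect to $d_S$; concretely $\eta(x)=\max(0,\min(1,(2R-|x_0^{-1}x|)/R))$ works, giving $|\eta(x)-\eta(y)|\leq |x^{-1}y|/R$. Then I would test the harmonicity of $f$ against $\eta^2 f$: since $\eta^2 f$ is bounded and compactly supported while $f$ has polynomial growth and $\mu$ has exponential tail, all integrals below converge absolutely and Fubini applies. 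From $0=\int (\eta^2 f)(x)\cdot\int(f(xs)-f(x))\,d\mu(s)\,d\haar(x)$, a symmetrization using Lemma \ref{lem: change of variables} together with the symmetry of $\mu$ yields the Dirichlet-form identity
\begin{align*}
\int_\G\int_\G (\eta^2(x)f(x)-\eta^2(y)f(y))(f(x)-f(y))\,d\mu(x^{-1}y)\,d\haar(x)=0.
\end{align*}

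Next, apply the elementary algebraic identity
\begin{align*}
\eta^2(x)f(x)-\eta^2(y)f(y)=\tfrac{1}{2}(\eta^2(x)+\eta^2(y))(f(x)-f(y))+\tfrac{1}{2}(\eta^2(x)-\eta^2(y))(f(x)+f(y)).
\end{align*}
Multiplying by $(f(x)-f(y))$, substituting into the Dirichlet-form identity, and bounding the resulting cross-term with Cauchy--Schwarz and Young's inequality (using $(\eta(x)+\eta(y))^2\leq 2(\eta^2(x)+\eta^2(y))$ to absorb half of the gradient into the left-hand side) leads to the key estimate
\begin{align*}
\int\int (\eta^2(x)+\eta^2(y))(f(x)-f(y))^2\,d\mu\,d\haar\leq 2\int\int (\eta(x)-\eta(y))^2(f(x)+f(y))^2\,d\mu\,d\haar.
\end{align*}
Because $\eta^2\equiv 1$ on $B$, the left-hand side dominates $\int_B\int_\G (f(x)-f(y))^2\,d\mu(x^{-1}y)\,d\haar(x)$, so the task reduces to estimating the right-hand side.

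To do so, I would split the integration domain according to whether $|x^{-1}y|\leq R$ or $|x^{-1}y|>R$. Since $\eta$ is supported in $2B$, only pairs with at least one of $x,y$ in $2B$ contribute; in the short-range regime this forces both $x,y\in 3B$. Applying $|\eta(x)-\eta(y)|^2\leq |x^{-1}y|^2/R^2$ and $(f(x)+f(y))^2\leq 2(f(x)^2+f(y)^2)$, then swapping the roles of $x$ and $y$ via Lemma \ref{lem: change of variables} and invoking the finiteness of $M_2=\int|s|^2\,d\mu(s)$ (a consequence of the exponential tail), produces a short-range bound of the form $(cM_2/R^2)\int_{3B}f^2\,d\haar$. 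On the long-range part one has $\eta(y)=0$ (as $y\notin 3B\supset 2B$) and the crude bound $(\eta(x)-\eta(y))^2\leq 1$; the $f(y)^2$ contribution is then controlled directly by Lemma \ref{aux} (applied after left-translating the balls to be centred at the identity), while the $f(x)^2$ contribution is bounded by $\int_{2B}f(x)^2\,d\haar(x)\cdot\Pr_\mu[|s|>R]$ and absorbs into $c_f^2c_2e^{-c_\mu R}$ after adjusting constants to swallow the polynomial prefactor.

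The main obstacle is precisely this long-range analysis: in Kleiner's original finitely-supported setting such interactions simply do not exist, but for courteous measures one must carefully pit the polynomial growth of $f$ against the exponential decay of $\mu$. This is exactly the role of Lemma \ref{aux}, and explains why the exponential tail condition in the definition of a courteous measure is essential and cannot be weakened to a finite-moment hypothesis.
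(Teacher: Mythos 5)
Your proposal takes a genuinely different route from the paper: rather than expanding $\phi_x^2(f_x-f_y)$ asymmetrically into three pieces as the paper does (producing an $S_1$ term whose vanishing requires integrating $x$ over all of $\G$ and hence carries a far-field error controlled by Lemma~\ref{aux}), you work with the symmetric Caccioppoli identity $\eta^2(x)f(x)-\eta^2(y)f(y)=\tfrac{1}{2}(\eta^2(x)+\eta^2(y))(f(x)-f(y))+\tfrac{1}{2}(\eta^2(x)-\eta^2(y))(f(x)+f(y))$ together with exact vanishing of the Dirichlet form
$\int\int(\eta^2(x)f(x)-\eta^2(y)f(y))(f(x)-f(y))\,d\mu(x^{-1}y)\,d\haar(x)=0$.
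That identity is indeed correct (it follows from testing harmonicity against $\eta^2 f$ and Lemma~\ref{lem: change of variables}, with all integrals absolutely convergent thanks to the compact support of $\eta$, the polynomial growth of $f$, and the exponential tail of $\mu$), and it neatly sidesteps the paper's $S_1$ error term. The Young/absorption step (valid since the quantity being absorbed is finite) and the short-range Lipschitz bound via the second moment of $\mu$ are also sound.

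There is, however, a concrete gap in your long-range analysis. Having split by $|x^{-1}y|\lessgtr R$, you assert that on the long-range part ``$\eta(y)=0$ (as $y\notin 3B\supset 2B$).'' This is false: $|x^{-1}y|>R$ does not force $y\notin 3B$. Since $2B$ has diameter up to $4R$, there are pairs with both $x,y\in 2B$ and $|x^{-1}y|>R$, for which $\eta(x)$ and $\eta(y)$ are both nonzero; your stated long-range bounds (the $f(y)^2$ piece via Lemma~\ref{aux}, the $f(x)^2$ piece via $\int_{2B}f^2\cdot\Pr_\mu[|s|>R]$) cover only the subcase $y\notin 3B$ and miss this contribution. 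The repair is easy: for $x,y\in 3B$ with $|x^{-1}y|>R$, use $(\eta(x)-\eta(y))^2\leq 1$ and $\Pr_\mu[|s|>R]\leq e^{-c_\mu R}$, giving a contribution bounded by a constant times $e^{-c_\mu R}\int_{3B}f^2\,d\haar$, which is in turn $\lesssim R^{-2}\int_{3B}f^2\,d\haar$ and absorbs into the main term. But as written, your case split is incomplete and the asserted vanishing of $\eta(y)$ is not justified.
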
 
%To simplify notation, we will denote $f_x:=f_x$. Before going into the main body of the proof, we prove the following fact, which we will use several times throughout the proof. Recall that $\G$ is assumed to have polynomial growth of degree at most $m$, and $h$ is a $\mu$-harmonic function assumed to have polynomial growth of degree at most $k$.

\begin{proof}
The skeleton of the argument follows the lines of \cite{ShaT10}, Lemma 7.3. To simplify notation, we will denote $f_x:=f(x)$. For convenience, we will use the following identity, obtained by the change of variables $s\mapsto x^{-1}y$. 
\begin{align*}
\int_{B(3R)} \int_{\G} | f_x- f_{xs} |^2 d\mu(s) d\haar(x)
=
\int_{B(3R)} \int_{\G} | f_x- f_y |^2 d\mu(x^{-1}y) d\haar(x)
\end{align*}
Fix $R>0$. Let $\phi$ be the cutoff function 
$$
\phi_x=
\begin{cases}
1,\ 				 & |x|\leq R	\\
\frac{2R-|x|}{R},\   & R<|x|\leq 2R 	\\
0,\ 				 & |x|>2R.
\end{cases}
$$

Since $\phi \equiv 1$ on $B(R)$, it follows that 

\begin{align}
\label{main eq}
\int_{B(R)} \int_{\G} |f_y-f_x|^2d\mu(x^{-1}y) d\haar(x)
&= 		
\int_{B(R)} \int_{\G} \phi_x^2 |f_y-f_x|^2d\mu(x^{-1}y)d\haar(x) 
\\ &\leq   
\int_{B(3R)} \int_{\G} \phi_x^2 |f_y-f_x|^2d\mu(x^{-1}y)d\haar(x).	\nonumber	
\end{align}

%\begin{align}
%\label{main eq}
%		\sum_{x,y\in B(R)} (f_x-f_y)^2\mu(x^{-1}y)
%&= 		\sum_{x,y\in B(R)} \phi_x^2 (f_x-f_y)^2\mu(x^{-1}y)	\\
%&\leq   \sum_{\substack {x\in B(3R) \\ y\in \G}}\phi_x^2(f_x-f_y)^2\mu(x^{-1}y).	\nonumber	
%\end{align}

%\color{red}
%\begin{ques*}
%Maybe it should be $x\in B(2R)$? or should the cutoff function be different?
%\end{ques*}
%\color{black}

Note that the integrals in \eqref{main eq} are absolutely convergent since $f$ has polynomial growth and $\mu$ has exponential tail. Now, for any $x,y\in\G$ we have
\begin{align}
\label{break in three}
\phi_x^2(f_x-f_{y})
&=(f_x\phi_x^2-f_{y}\phi_{y}^2) + f_{y}(\phi_x-\phi_{y})^2
-2f_{y}\phi_x(\phi_x-\phi_{y}).	
\end{align}
Plugging \eqref{break in three} into \eqref{main eq}, we get

\begin{align*}
\int_{B(3R)} \int_{\G} \phi_x^2(f_x-f_{y})^2d\mu(x^{-1}y) d\haar(x)
&= 
\int_{B(3R)} \int_{\G} (f_x\phi_x^2-f_{y}\phi_{y}^2)(f_x-f_{y}) d\mu(x^{-1}y)	d\haar(x)
\\&
+ \int_{B(3R)} \int_{\G} f_{y}(\phi_x-\phi_{y})^2(f_x-f_{y})d\mu(x^{-1}y)	d\haar(x)
\\&
-\int_{B(3R)} \int_{\G} 2f_{y}\phi_x(\phi_x-\phi_{y})(f_x-f_{y})d\mu(x^{-1}y)d\haar(x)	\\
&:=S_1+S_2-S_3
\end{align*}

and we will bound each of the terms $S_1,S_2,S_3$ separately. 
For the first sum, we have
\begin{align*}
&S_1= 
\int_{B(3R)} \int_{\G} (f_x\phi_x^2-f_{y}\phi_{y}^2)(f_x-f_{y}) d\mu(x^{-1}y) d\haar(x)
\\&=
\int_{B(3R)} \int_{\G}  f_x\phi_x^2(f_x-f_{y}) d\mu(x^{-1}y)
-\int_{B(3R)} \int_{\G} f_{y}\phi_{y}^2(f_x-f_{y}) d\mu(x^{-1}y)d\haar(x)	
\\&=
\int_{B(3R)}  f_x\phi_x^2 \int_{\G} (f_x-f_{y}) d\mu(x^{-1}y)d\haar(x)
-\int_{B(3R)} \int_{\G} f_{y}\phi_{y}^2(f_x-f_{y}) d\mu(x^{-1}y)d\haar(x).
\end{align*}
By harmonicity of $f$, the left expression vanishes. We get:
\begin{align*}
|S_1| 
&= \Big| \int_{B(3R)} \int_{\G} f_{y}\phi_{y}^2(f_x-f_{y}) d\mu(x^{-1}y) d\haar(x) \Big|	\\
&= \Big| 
\int_{\G} \int_{\G} f_{y}\phi_{y}^2(f_x-f_{y}) d\mu(x^{-1}y) d\haar(x)
-
\int_{\G\setminus B(3R)} \int_{\G} f_{y}\phi_{y}^2(f_x-f_{y}) d\mu(x^{-1}y) d\haar(x)
\Big|.
\end{align*} 
Again by harmonicity, using Lemma \ref{lem: change of variables}, the left term vanishes. Recall that $\phi$ is supported on $B(2R)$. Hence, if $x\notin B(3R)$ and $y\in B(2R)$, the triangle inequality implies $|x^{-1}y|>R$. Therefore, 
\begin{align*}
|S_1|
&=		\Big| \int_{\G\setminus B(3R)} \int_{\G} f_{y}\phi_{y}^2(f_x-f_{y}) d\mu(x^{-1}y) d\haar(x)
 \Big|	
\\ & \leq	
\int_{\G\setminus B(3R)}\int_{B(2R)} |f_{y}|\cdot|f_x-f_{y}|d\mu(x^{-1}y)d\haar{x}
\\& \leq 
c_f^2\cdot c_2\cdot e^{-c_{\mu} R}
\end{align*}
where the last inequality is by Lemma \ref{aux}.

%\color{red}
%Since $\max \{f_x:\ |x|\leq r\}$ is asymptotically $r^k ||f||_k$ and $\mu$ satisfies \ref{extra smooth}, we have $\sum_{x\notin B(3R)}|f_x| d\mu(x^{-1}y)\leq ||f||_k e^{-cr}$ for some $c=c(\mu,k)$. The same argument works for the term $\sum_{x\notin B(3R)}|f_{y}| d\mu(x^{-1}y)$. We gather
%$$
%|A|\leq ||f||_k e^{-cr}
%$$
%for some $c=c(\mu,k)$.
%\color{black}
%%%%%%%%%%%%%%%%%%%%%%%%%%%%%%%%%%%%%%%%%%%%%%%%%%%%%%%%%%%%%%%%%%%%%%%%%%%%

For the second sum, by the triangle inequality and the averages inequality $|ab|\leq \frac{1}{2}a^2+\frac{1}{2}b^2$, we get
$$
|f_{y}(f_x-f_{y})|\leq |f_{y}f_x|+|f_{y}^2|\leq \frac{1}{2}f^2_x+\frac{3}{2}f^2_{y}.
$$
Using again the fact that $\phi$ is supported on $B(2R)$, and noting that $(\phi_{y}-\phi_x)^2\leq \frac{d(y,x)^2}{R^2}$, we deduce
\begin{align*}
|S_2|
&\leq 
\frac{1}{R^2}  \int_{B(3R)} \int_{\G} (\frac{1}{2}f^2_x+\frac{3}{2}f^2_{y})d(x,y)^2 d\mu(x^{-1}y) d\haar(x) 	
\\ & = 
\frac{1}{R^2}  \int_{B(3R)} \int_{B(3R)} (\frac{1}{2}f^2_x+\frac{3}{2}f^2_{y})d(x,y)^2 d\mu(x^{-1}y) d\haar(x)
\\ & +
\frac{1}{R^2}  \int_{B(3R)} \int_{\G\setminus B(3R)} (\frac{1}{2}f^2_x+\frac{3}{2}f^2_{y})d(x,y)^2 d\mu(x^{-1}y) d\haar(x)
\\ & :=
S_{2,1}+S_{2,2}.
\end{align*}
Using Lemma \ref{lem: change of variables}, we see that the expression in $S_{2,1}$ is symmetric in $x,y$. Hence,
\begin{align*}
S_{2,1}= 2\cdot\frac{1}{R^2}\int_{B(3R)} f_x^2 \int_{B(3R)} d(x,y)^2d\mu(x^{-1}y)d\haar(x)	\leq \frac{2 \sigma^2}{R^2}\int_{B(3R)} f_x^2 d\haar(x),
\end{align*}
where $\sigma^2$ is the $\mu$-second moment of the function $x\mapsto d(1,x)$. For the other sum, using Lemma \ref{aux}, we have
\begin{align*}
S_{2,2}
&\leq 
\frac{1}{R^2}\int_{B(2R)} \int_{\G\setminus B(3R)} 
\frac{1}{2}f^2_x d\mu(x^{-1}y) d\haar(x)
\\ & +
\frac{1}{R^2} \int_{B(2R)} \int_{\G\setminus B(3R)} \frac{3}{2}f^2_{y} d\mu(x^{-1}y) d\haar(x)
\leq 
\frac{1}{R^2} \int_{B(3R)} f^2_x d\haar(x)
+ c_f^2\cdot c_2\cdot e^{-c_{\mu} R}.
\end{align*}

%%%%%%%%%%%%%%%%%%%%%%%%%%%%%%%%%%%%%%%%%%%%%%%%%%%%%%%%%%%%%%%%%%%%%%%%%%%%

For the third sum, another application of the averages inequality in the form $|ab|=|\frac{1}{2}a\cdot 2b|\leq \frac{1}{4} a^2+b^2$, gives

\begin{align*}
|S_3|
&=  
\Big |\int_{B(3R)} \int_{\G}  2f_{y}\phi_x(\phi_x-\phi_{y})(f_x-f_{y})d\mu(x^{-1}y) d\haar(x)	\ \Big |	\\
&\leq  \frac{1}{2}\cdot \int_{B(3R)} \int_{\G} \phi_x^2 (f_x-f_{y})^2 d\mu(x^{-1}y) d\haar(x)
+ 2\cdot \int_{B(3R)} \int_{\G}  f_{y}^2 (\phi_x-\phi_{y})^2 d\mu(x^{-1}y) d\haar(x) \\
\end{align*} 
The left term is just half of what we wish to bound in the proposition, and the right term was already dealt with in the second sum. 

%%%%%%%%%%%%%%%%%%%%%%%%%%%%%%%%%%%%%%%%%%%%%%%%%%%%%%%%%%%%%%%%%%%%%%%%%%%%

Putting the three ingredients together, we get
\begin{align*}
& \int_{B(R)} \int_{\G} (f_x-f_y)^2\mu(y^{-1}x) d\haar(x)
\leq	
\int_{B(3R)} \int_{\G} \phi_x^2(f_x-f_{y})^2d\mu(x^{-1}y) d\haar(x)
\\ & \leq 
12 c_f^2\cdot c_2\cdot e^{-c_{\mu} R}+ \frac{8 \sigma^2}{R^2}\int_{B(3R)} f_x^2 d\haar(x)
%+\frac{1}{2} \sum_{\substack {x\in B(3R) \\ y\in \G}}\phi_x^2(f_x-f_{y})^2d\mu(x^{-1}y)
\end{align*} 
and the claim follows.
\end{proof} 

This concludes the proofs of Poincar\'e and reverse Poincar\'e inequalities. We continue to the proof of the main theorem.

%\end{document}

\section{Proof of main Theorem}
\label{section: Kleiner}

In this section will prove our main result, Theorem \ref{main theorem}. The proof follows the lines of \cite{Kle10}, in a simplified manner that assumes doubling property: 
\begin{align}
\label{eq: Doubling property}
\exists D>0 \text{\quad s.t.} \quad \haar(B(2R)) \leq D\cdot \haar(B(R)).
\end{align}
The proof that polynomial growth implies doubling property \cite{Lo87} invokes Gromov's theorem on groups with polynomial growth \cite{Gro81}. However, since our goal here is not to prove Gromov's theorem, we may assume the doubling property. While not necessary, this significantly simplifies our proof, and helps the reader to focus on the novel parts of the proof.

Let $\mathcal{V}$ be a finite dimensional subspace of $HF_k(\G,\mu)$. We will show that the dimension of $\mathcal{V}$ is bounded by a constant that does not depend on $\mathcal{V}$, hence deducing that $\dim HF_k(\G,\mu)<\infty$. 

Denote $\dim \mathcal{V}=2\dv$. For two measurable functions $u,v:\G\to\F$, define
\begin{align*}
Q_R(u,v):=\int_{B(R)} u(x)\overline{v(x)} d\haar(x).
\end{align*}
Since $\mathcal{V}$ is finite dimensional, there exists $R_0$ such that $Q_R$ is a positive definite bilinear form for all $R\geq R_0$.

%%%%%%%%%%%%%%%%%%%%%%%%%%%%%%%%%%%%%%%%%%%%%%%%%%%%%%%%
\subsubsection*{A controlled cover.} Our first step is to control the cover size of a large ball by smaller ones, and the intersection multiplicity of the covering balls. It is here that the doubling property comes into play. 

Let $\eps>0$ and $R>2\eps^{-1}$. Let $\{x_1,...,x_J\}$ be a maximal $\eps R$-separated set in $B(R)$. Let $B_j:=B(x_j,\eps R)$. The balls $\mathcal{B}=\{B_j:\ 1\leq j\leq J\}$ cover $B(R)$, and $\frac{1}{2}\mathcal{B}=\{B(x_j,\frac{\eps R}{2} ):\ 1\leq j\leq J\}$ are pairwise disjoint. 

For any measurable set $A$, let $|A|=\haar(A)$ . Since the shrunk balls are disjoint (and of same measure), we have using the doubling property
\begin{align}
\label{Cover size}
|J|\leq \frac{|B(R)|}{|B(\frac{\eps}{2}R)|}
=\frac{|B(2^{\frac{\log(2/\eps))}{\log(2)}} \cdot \frac{\eps}{2}R)|}{|B(\frac{\eps}{2}R)|}
\leq \frac{D^{\big \lceil \frac{\log(2/\eps))}{\log(2)} \big \rceil } |B(\frac{\eps}{2}R)|}{|B(\frac{\eps}{2}R)|}
=D^{\big \lceil \frac{\log(2/\eps))}{\log(2)} \big \rceil }.
\end{align}

Now, suppose $x$ is in the intersection of $\beta$ balls in $3\mathcal{B}=\{B(x_j,3\eps R ):\ 1\leq j\leq J\}$. This implies that $B(x,3.5\eps R)$ contains at least $\beta$ balls from $\frac{1}{2}\mathcal{B}$. Hence

\begin{align}
\label{intersection multiplicity}
\beta\leq \frac{|B(3.5\eps R)|}{|B(\frac{\eps}{2}R)|} 
\leq \frac{|B(2^3\cdot \frac{\eps}{2} R)|}{|B(\frac{\eps}{2}R)|}
\leq \frac{D^3|B(\frac{\eps}{2} R)|}{|B(\frac{\eps}{2}R)|}
=D^3.
\end{align}

%Thus, any integer $k=k(\eps)\geq \max\{\frac{\log(2)-\log(\eps)}{\log(2)},4\}$ satisfies both (\ref{Cover size}),(\ref{intersection multiplicity}). 

%%%%%%%%%%%%%%%%%%%%%%%%%%%%%%%%%%%%%%%%%%%%%%%%%%%%%%%%%

\subsubsection*{Estimating functions relative to the cover $\mathcal{B}$.}
Our next step is to control the size of harmonic functions with regards to their averages on smaller balls. To that end, we invoke the Poincar\'e and reverse Poincar\'e inequalities. Note that unlike the compactly supported measures case, we get an error term, which we will deal with later. 

Let $\phi:\mathcal{V}\to \F^J$ be defined by $(\phi(u))_j=\frac{1}{|B_j|}\int_{\B_j}ud\haar$. Suppose $u\in \ker (\phi)$ and $\eps<\frac{1}{3}$. Assume without loss of generality that $\frac{d\mu}{d\haar}\geq c_1>0$ on $S^2$. By applying propositions \ref{prop: courteous Poincare inequality} and  \ref{prop: courteous reverse Poincare}, we have
\begin{align}
\label{Poincares}
Q_R(u,u)
&=\int_{B(R)} |u(x)|^2 d\haar(x)
\leq 
\sum_{j\in J} \int_{B_j} |u(x)|^2 d\haar(x)
=\sum_{j\in J} \int_{B_j} |u(x)- (\phi(u))_j|^2 d\haar(x)	
\\ \nonumber & \leq  
\frac{|B(2\eps R)|^2}{|B(\eps R)|^2} \cdot c_1^{-1} 32(\eps R)^2 \cdot \sum_{j\in J} \int_{3B_j} \int_{\G} |u(x)-u(xs)|^2\mu(s) d\haar(x)	
\\ \nonumber & \leq  
D^2 \cdot c_1^{-1} 32 (\eps R)^2 \cdot D^3 \cdot \int_{B(2R)}  \int_{\G} |u(x)-u(xs)|^2\mu(s) d\haar(x)	
%\\  \nonumber & \leq  
%32 c_1^{-1} D^5 \eps^2 c \cdot Q_{6R} (u,u) + 
%32 c_1^{-1} D^5 \eps^2  \cdot  R^2 \cdot 
%c_u^2 \cdot c e^{-c_{\mu} \cdot R}
\\  \nonumber & \leq 
C\cdot D^5 \eps^2 \cdot Q_{6R} (u,u) + 
 C\cdot  D^5 \eps^2  \cdot  R^2 \cdot 
c_u^2 \cdot e^{-c_{\mu} \cdot R}     
\end{align}
for $C=C(S,\gr,k,\mu)>0$. Note that in the second inequality we used the fact that for $\eps<\frac{1}{3}$, $3\mathcal{B}$ is contained in $B(2R)$, and the intersection multiplicity bound \eqref{intersection multiplicity}.

%%%%%%%%%%%%%%%%%%%%%%%%%%%%%%%%%%%%%%%%%%%%%%%%%%%%%%%%%
\subsubsection*{Controlled growth.}
In this step, we show that there are infinitely many scales $R$ for which there is a subspace $\mathcal{U}\leq \mathcal{V}$ such that the functions in $\mathcal{U}$ exhibit doubling behavior.

\begin{lem}
\label{det doubling}
There exists a constant $\Delta=\Delta(d,k)$ such that $\frac{\det(Q_{6R})}{\det(Q_R)}\leq \Delta^{\dv}$ for infinitely many $R\geq R_0$. Moreover, for any such $R$, there exists a subspace $\mathcal{U}\leq \mathcal{V}$ of dimension at least $\dv$ such that $\frac{Q_{6R}(u,u)}{Q_R(u,u)}\leq \Delta$ for any $0\neq u\in \mathcal{U}$.  
\end{lem}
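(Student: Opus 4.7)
The plan is to compare the growth of $\log \det Q_R$ along the geometric sequence $R_n := 6^n R_0$ against a polynomial-in-$R$ upper bound, and then use a pigeonhole argument to extract the scales where the ratio is controlled. Fix any basis $u_1,\dots,u_{2\dv}$ of $\mathcal{V}$. Since each $u_i\in HF_k(\G,\mu)$, there exist $c_i>0$ with $|u_i(x)|\leq c_i(1+|x|)^k$; combined with the polynomial growth $|B(R)|\leq C_\G R^{\gr}$ and Hadamard's inequality applied to the positive-semidefinite Gram matrix,
\[
\det Q_R \;\leq\; \prod_{i=1}^{2\dv} Q_R(u_i,u_i) \;\leq\; \Big(\prod_i c_i^2\Big)\, C_\G^{2\dv}\,(1+R)^{(2k+\gr)\cdot 2\dv}.
\]
Setting $a_n := \log\det Q_{R_n}$ and $C_0 := 2(2k+\gr)\log 6$, taking logarithms gives $a_n \leq C_0\dv\, n + A_\mathcal{V}$, where the offset $A_\mathcal{V}$ depends on the basis but the slope $C_0\dv$ depends only on $k$ and $\gr$.

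Next, I would combine monotonicity with telescoping. Since $Q_{R'}-Q_R = \int_{B(R')\setminus B(R)} u\overline{v}\,d\haar$ is positive semi-definite for $R'\geq R$, the sequence $(a_n)$ is non-decreasing and
\[
\sum_{n=1}^N (a_n-a_{n-1}) \;=\; a_N-a_0 \;\leq\; C_0\dv N + A_\mathcal{V}.
\]
If $a_n-a_{n-1} > (C_0+1)\dv$ held for every $n$ beyond some threshold $n_0$, summing from $n_0$ to $N$ would yield $a_N - a_{n_0} > (C_0+1)\dv(N-n_0)$, which exceeds the above telescoping bound once $N$ is large enough to dominate $A_\mathcal{V}$, a contradiction. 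Hence there are infinitely many $n$ with $a_n - a_{n-1}\leq (C_0+1)\dv$; equivalently $\det Q_{6R}/\det Q_R \leq \Delta^{\dv}$ for $R:=R_{n-1}$ and $\Delta := e^{C_0+1} = \Delta(\gr,k)$.

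For the ``moreover'' statement, given such $R \geq R_0$ I would simultaneously diagonalize $Q_R$ and $Q_{6R}$ (both positive definite on $\mathcal{V}$): there is a basis $v_1,\dots,v_{2\dv}$ of $\mathcal{V}$ with $Q_R(v_i,v_j)=\delta_{ij}$ and $Q_{6R}(v_i,v_j)=\lambda_i\delta_{ij}$. Monotonicity forces each $\lambda_i\geq 1$, and $\prod_i \lambda_i = \det Q_{6R}/\det Q_R \leq \Delta^{\dv}$. Ordering $\lambda_1\geq\cdots\geq\lambda_{2\dv}$, if $\lambda_{\dv+1}$ exceeded $\Delta$ then $\prod_i \lambda_i \geq \lambda_1\cdots\lambda_{\dv+1} > \Delta^{\dv+1}$, contradicting the bound. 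Therefore $\lambda_i \leq \Delta$ for every $i > \dv$, and $\mathcal{U}:=\mathrm{span}(v_{\dv+1},\dots,v_{2\dv})$ is a $\dv$-dimensional subspace on which $Q_{6R}(u,u)/Q_R(u,u) \leq \Delta$, as one sees by expanding $u=\sum_{i>\dv} c_i v_i$.

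The only subtle point is that the upper bound on $\log\det Q_R$ carries a basis-dependent offset $A_\mathcal{V}$, through the constants $c_i=\|u_i\|_k$. This is harmless: the pigeonhole uses only the asymptotic linear slope of the upper bound, which is controlled by $k$ and $\gr$ alone, so $A_\mathcal{V}$ only influences the threshold past which the pigeonhole bites and does not enter the final $\Delta$. No input beyond the polynomial growth estimate $|u(x)| \leq c_u(1+|x|)^k$ built into $HF_k(\G,\mu)$ is needed for this lemma; the Poincar\'e machinery of Section~\ref{section: Poincares} is consumed at the subsequent step where Lemma~\ref{det doubling} is used to bound $\dim \mathcal{V}$.
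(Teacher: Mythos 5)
Your argument is essentially the same as the paper's: a Hadamard-inequality upper bound $\det Q_R \lesssim R^{2\dv(\gr+2k)}$, followed by a telescoping/pigeonhole along the geometric scales $6^n R_0$ to produce infinitely many good $R$, and simultaneous diagonalization of the two positive-definite forms (with the observation that monotonicity forces each eigenvalue $\lambda_i\ge 1$) to extract the half-dimensional subspace $\mathcal{U}$. The logarithmic rephrasing and the explicit tracking of the basis-dependent offset $A_{\mathcal{V}}$ are cosmetic; the substance matches.
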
 
\begin{proof}
Let $R\geq R_0$. Let $B=\{u_1,...u_{2\dv}\}$ be a basis of $\mathcal{V}$ such that $|u_i(x)|\leq (1+|x|)^k$ for all $1\leq i\leq 2\dv$. Recall that $\haar(B(R))\leq c_sR^d$. We have
\begin{align*}
Q_R(u_i,u_i)=\int_{B(R)} |u_i(x)|^2 d\haar(x) \leq |B(R)|\cdot \sup \{|u_i(x)|^2:\ |x|\leq R\} \leq c_S R^d\cdot (1+R)^{2k}.
\end{align*}
Hence by Hadamard's inequality, 
\begin{align}
\label{Adam Amar}
\det(Q_R)\leq \prod_{i=1}^{2\dv}Q_R(u_i,u_i)\leq \big(c_S R^d\cdot (1+R)^{2k} \big)^{2\dv}.
\end{align}
Suppose by contradiction that $\lim_{R\to\infty} \frac{\det(6R)}{\det(R)}=\infty$. Then for any $\Delta>0$ there exists $R_\Delta$ such that $\frac{\det(6R)}{\det(R)}>\Delta^{\dv}$ for any $R\geq R_{\Delta}$. By telescoping, 
$$
\frac{\det(6^n R_{\Delta})}{\det(R_{\Delta})}>\Delta^{\dv n} \ \ \forall n>0,
$$
i.e.\ $\det(6^n R_{\Delta}) >\det(R_{\Delta})\cdot \Delta^{\dv n}$. On the other hand, by \eqref{Adam Amar} we have 
\begin{align*}
\det(6^n R_{\Delta})
&\leq \big ( c_S (6^n R_{\Delta})^d\cdot (1+(6^n R_{\Delta}))^{2k} \big)^{2\dv}	\\
&\leq c_S^{2\dv}\cdot 16^{k\dv} \cdot R_{\Delta}^{2\dv(d+2k)} \cdot  \big(6^{2(d+2k)} \big)^{\dv n},
\end{align*}
which is a contradiction for $\Delta>6^{2(d+2k)}$ and large enough $n$. 

For the second part of the claim, suppose $R$ satisfies $\frac{\det(Q_{6R})}{\det(Q_R)}\leq \Delta^{\dv}$, and let $B$ be a basis for $\mathcal{V}$ which is both $Q_R$-orthonormal and $Q_{6R}$-orthogonal. We have 
\begin{align*}
\frac{\prod_{i=1}^{2\dv} Q_{6R}(u_i,u_i)}{\prod_{i=1}^{2\dv} Q_R(u_i,u_i)}=\det(Q_{6R})\leq \Delta^{\dv},
\end{align*}
implying that there exists a subset $C\subset B$ of size at least $\dv$ such that $Q_{6R}(u,u)\leq \Delta$ for any $u\in C$. Letting $\mathcal{U}:=span\ C$, we get the desired conclusion. 
\end{proof}

%%%%%%%%%%%%%%%%%%%%%%%%%%%%%%%%%%%%%%%%%%%%%%%%%%%%%%%%%

\subsection*{A bound on the polynomial $k$-norm}  
For $v\in \mathcal{V}$, define
$$
c_v:=\inf \{ c:\ |v(x)|\leq c\cdot (1+|x|)^k\ \ \ \forall\ x\in \G \}.
$$

One can easily check that the function $v\mapsto c_v$ is a norm on $V$, which we dub \textit{polynomial $k$-norm}. Since $v\mapsto \big( Q_{R_0}(v,v) \big)^{1/2}$ is a norm as well, and $\mathcal{V}$ is finite dimensional, we use norm equivalency to see that 
$$
\frac{c_v^2}{Q_{R_0}(v,v)}\leq M 
$$
for some constant $M=M(\dv)>0$. Now, since $Q_R$ is increasing in $R$ , we have 
\begin{align}
\label{c_u bound}
\frac{c_v^2}{Q_R(v,v)}\leq \frac{c_v^2}{Q_{R_0}(v,v)} \leq M
\end{align}
for any $R>R_0$ and $0\neq v\in \mathcal{V}$.
%
%Note:
%\begin{itemize}
%\item $c_{\alpha v}=|\alpha| \cdot c_v$,
%\item $c_{u+v}\leq c_u+c_v$. 
%\end{itemize}

%Let $B:=\{v_1,...,v_{2\dv}\}$ be a $Q_{R_0}$-orthogonal basis, and let $\{c_1,...,c_{2\dv}\}$ be the corresponding constants as above. Let $v_i,v_j\in B$. Using the inequality $(a+b)^2\leq 2\cdot (a^2+b^2)$, we get 
%
%\begin{align*}
%\frac{c_{v_i+v_j}^2}{Q_{R_0}(v_i+v_j,v_i+v_j)}
%\leq 2\cdot \frac{c_i^2+c_j^2}{Q_{R_0}(v_i,v_i) +Q_{R_0}(v_j,v_j) }
%\leq 2\cdot \Big( \frac{c_i^2}{Q_{R_0}(v_i,v_i)}+ \frac{c_j^2}{Q_{R_0}(v_j,v_j) } \Big).
%\end{align*}
%Note that $\frac{c_{\alpha v}^2}{Q_R(\alpha v, \alpha v)}=\frac{c_{v}^2}{Q_R(v,v)}$. By induction, we get that for any $v\in \mathcal{V}$
%$$
%\frac{c_v^2}{Q_{R_0}(v,v)}\leq 2^{2\dv}\cdot \sum_{i=1}^{2\dv} \frac{c_i^2}{Q_{R_0}(v_i,v_i)}=:M.
%$$

%%%%%%%%%%%%%%%%%%%%%%%%%%%%%%%%%%%%%%%%%%%%%%%%%%%%%%%%%

\subsection*{Putting the ingredients together}
Let $\eps>0$ small enough so that $C D^5 \eps^2  <\frac{1}{2\Delta}$ (and smaller than $\frac{1}{3}$).
By Lemma \ref{det doubling} and inequality \eqref{c_u bound}, we can choose $R=R(\dv,D,S,\gr,k,\mu)>\max\{R_0,2\eps^{-1}\}$ large enough so that 
$$
C D^5 \eps^2  \cdot  R^2 \cdot 
c_u^2 \cdot e^{-c_{\mu} \cdot R} \leq  \frac{1}{4}Q_R(u,u)
$$ 
and $Q_{6R}(u,u)\leq \Delta Q_R(u,u)$ for any $u\in \mathcal{U}$, where $\dim \mathcal{U}\geq \frac{1}{2}\dim \mathcal{V}$. Plugging this into (\ref{Poincares}), we get
\begin{align*}
Q_R(u,u) & \leq \frac{1}{2\Delta} Q_{6R}(u,u) + \frac{1}{4} Q_R(u,u)
\leq \frac{3}{4} Q_R(u,u)
\end{align*}
for any $u\in \mathcal{U}\cap \ker(\phi)$, implying $Q_R(u,u)=0$, and consequently $u=0$. So $\phi:\mathcal{U}\to \F^{|J|}$ is injective, and we conclude by \eqref{Cover size} that
$$
\dim \mathcal{V}\leq 2\dim \mathcal{U} \leq 2|J|\leq 2 D^{\big \lceil \frac{\log(2/\eps))}{\log(2)} \big \rceil }.
$$
This concludes the proof of Theorem \ref{main theorem}.
%\end{document}

\section{Harmonic functions are polynomials}
\label{section: polys}
The purpose of this section is to prove Theorem \ref{thm: conn. polys}. The proof is similar to the proof of \cite[Theorem 1.3]{MPTY17}, and we give it here for completeness.

We start by recalling some basic facts about groups acting by linear transformations. Suppose that $G$ is a group acting linearly on an $n$-dimensional vector space $V$ over a field $\C$. We denote by $\mathrm{Hom}(G,\C^\times)$ the characters of the group $G$ into the multiplicative group $\C^\times$.
Given $\lambda \in \mathrm{Hom}(G,\C^\times)$, we may denote the {\em weight space} corresponding to $\lambda$ by 
$$ V_\lambda = V_{\lambda}^{(1)} = \{ v \in V \ : \ x v = \lambda(x) v \ , \ \forall \ x \in G\} =
\bigcap_{x \in G} \ker (x-\lambda(x) I) . $$
The {\em $k$-th generalised weight space} is defined inductively by
$$ V_{\lambda}^{(k)} = \{ v \in V \ : \ (x-\lambda(x)I) v \in V_{\lambda}^{k-1} \ , \ \forall \ x \in G\} . $$
We also set $V_{\lambda}^{(0)} = \{0\}$, which is consistent with these definitions.
The {\em generalised weight space} is defined by
$$ V_{\lambda}^* = \bigcup_k V_{\lambda}^{(k)} . $$
Thus, $v \in V_{\lambda}^*$ if and only if there exists $k$ such that $(x-\lambda(x) I)^k v = 0$
for all $x \in G$.  Note that $V_{\lambda}^*$ is an $G$-invariant subspace.
It is a well-known fact from linear algebra that
$V_{\lambda}^* \cap V_{\beta}^* = \{ 0\}$ if $\lambda \neq \beta$.
It is important to note that this definition is with respect to some group acting linearly on $V$, and depends on the specific choice of the acting group.

If $G$ acts linearly on a vector space $V$ and $K$ is the kernel of this action then $G/K$ is isomorphic to a subgroup of $GL(V)$. If $G/K$ is nilpotent then we say the action of $G$ on $V$ is nilpotent. We make use of the following lemma about nilpotent linear actions. The proof is standard and employs Lie-Kolchin's theorem \cite{Kol48}. 

\begin{lem}\label{lem:eigen}
Let $G$ be a connected group, and let $V$ be a finite-dimensional vector space over $\C$ such that $G$ acts linearly on $V$ and such that this action is nilpotent. Then 
\[
V=\bigoplus_{j=1}^r V_{\lambda_j}^\ast,
\]
with $\lambda_1,\ldots,\lambda_r\in\mathrm{Hom}(G,\C^\times)$.
\end{lem}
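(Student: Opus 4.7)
The plan is to reduce to the case where $G$ is a closed connected nilpotent Lie subgroup of $GL(V)$, apply Lie--Kolchin's theorem to trigonalize, and then extract the generalized weight space decomposition.

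First I would replace $G$ by its image $G/K$ in $GL(V)$: the image is a connected nilpotent subgroup of $GL(V)$, and characters and generalized weight spaces lift from $G/K$ back to $G$ without loss. Passing to the topological closure preserves connectedness and nilpotency (the latter because the iterated commutator identities are continuous), and by Cartan's theorem the closure is a closed Lie subgroup of $GL(V)$. Hence we may assume $G$ itself is a closed connected nilpotent Lie subgroup of $GL(V)$.

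Next I would invoke Lie--Kolchin \cite{Kol48} to obtain a basis of $V$ in which every element of $G$ is upper triangular, so the diagonal entries define continuous homomorphisms $G\to\C^\times$. The Lie algebra $\mathfrak{h}$ of $G$ is then a nilpotent Lie subalgebra of $\mathfrak{gl}(V)$, and by the standard Jordan decomposition for nilpotent Lie algebras of linear operators over $\C$ one has a commuting direct sum $\mathfrak{h}=\mathfrak{s}\oplus\mathfrak{n}$, where $\mathfrak{s}$ is an abelian toral subalgebra (diagonal in the chosen basis) and $\mathfrak{n}$ consists of nilpotent operators. Exponentiating gives $G=S\cdot N$ with $S=\exp(\mathfrak{s})$ a commuting family of semisimple elements, $N=\exp(\mathfrak{n})$ unipotent, and $S$ commuting elementwise with $N$.

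To finish, the commuting semisimple family $S$ decomposes $V=\bigoplus_\chi V_\chi$ into joint eigenspaces indexed by the nonzero characters $\chi\colon S\to\C^\times$. Each $V_\chi$ is $G$-invariant because $N$ commutes with $S$. I would define $\lambda_\chi\colon G\to\C^\times$ by $\lambda_\chi(sn):=\chi(s)$, which is well defined because $S\cap N=\{1\}$ (any operator that is both semisimple and unipotent is the identity). On $V_\chi$, each $g=sn$ acts as $\chi(s)=\lambda_\chi(g)$ times a unipotent operator, so $(g-\lambda_\chi(g)I)|_{V_\chi}$ is nilpotent; hence $V_\chi\subseteq V_{\lambda_\chi}^\ast$. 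Conversely, any $v\in V_{\lambda_\chi}^\ast$ must lie in the $\chi(s)$-eigenspace of every $s\in S$, since semisimplicity of $s$ reduces $(s-\chi(s)I)^k v=0$ to $(s-\chi(s)I)v=0$; this forces $v\in V_\chi$. Thus $V_\chi=V_{\lambda_\chi}^\ast$ and $V=\bigoplus_\chi V_{\lambda_\chi}^\ast$.

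The main obstacle I expect is justifying the Lie algebra decomposition $\mathfrak{h}=\mathfrak{s}\oplus\mathfrak{n}$ as a commuting direct sum; this is the real structural input beyond Lie--Kolchin, while the rest is bookkeeping once the trigonalization and the splitting are in hand.
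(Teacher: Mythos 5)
The paper does not supply a proof of this lemma; it only remarks that the argument ``is standard and employs Lie--Kolchin's theorem,'' so your proposal has to stand on its own merits. The reductions are fine: passing to the image in $GL(V)$, taking the topological closure (which preserves connectedness, and nilpotency by continuity of the iterated commutator maps), invoking Cartan's closed-subgroup theorem, and trigonalizing via Lie's theorem. The problem is precisely the step you flag as the crux: the asserted ``Jordan decomposition for nilpotent Lie algebras'' $\mathfrak h=\mathfrak s\oplus\mathfrak n$ with $\mathfrak s,\mathfrak n\subseteq\mathfrak h$ is \emph{false} in general. Take $X=\left(\begin{smallmatrix}0&1&0\\0&0&0\\0&0&i\end{smallmatrix}\right)$ and $G=\{\exp(tX):t\in\R\}\subset GL_3(\C)$. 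Then $G$ is a closed, connected, abelian (hence nilpotent) Lie subgroup with Lie algebra the line $\mathfrak h=\R X$; but $X$ is neither semisimple (the eigenvalue $0$ has algebraic multiplicity $2$ and geometric multiplicity $1$) nor nilpotent (it has eigenvalue $i$), so no nonzero element of $\mathfrak h$ is of either type and $\mathfrak h=\mathfrak s\oplus\mathfrak n$ is impossible. The Jordan--Chevalley components of an element of a Lie subalgebra of $\mathfrak{gl}(V)$ lie back in that subalgebra only under extra hypotheses (the subalgebra being algebraic, or semisimple); nilpotency alone does not suffice, and the topological closure of a nilpotent linear group need not be algebraic.

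The lemma itself is true, and there are two standard repairs. (i) Take the \emph{Zariski} closure $\bar G$ of the image of $G$ in $GL(V)$ rather than the topological closure. This $\bar G$ is a connected nilpotent algebraic group, for which the structure theorem $\bar G=T\times U$ (central torus times unipotent radical) genuinely holds; your final bookkeeping then runs verbatim with $S$ replaced by $T$, and the characters of $\bar G$ restrict to characters of $G$, with the generalized weight spaces for $\bar G$ and for $G$ agreeing because $G$ is Zariski dense in $\bar G$. (ii) Bypass any group-level splitting and use the Zassenhaus--Jacobson generalized weight-space decomposition for a nilpotent Lie subalgebra $\mathfrak h\subset\mathfrak{gl}(V)$: $V=\bigoplus_\lambda V_\lambda^{\mathfrak h}$, where each weight $\lambda:\mathfrak h\to\C$ vanishes on $[\mathfrak h,\mathfrak h]$ by Lie's theorem; connectedness of $G$ then lets each $\lambda$ integrate to a character $\Lambda\in\mathrm{Hom}(G,\C^\times)$ with $V_\lambda^{\mathfrak h}=V_\Lambda^*$. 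Either route completes the proof; as written, your argument does not, because of the false intermediate decomposition of $\mathfrak h$.
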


\begin{proof}[Proof of Theorem \ref{thm: conn. polys}]
Let $k\geq 1$. By Theorem \ref{main theorem}, $\dim HF_k(G,\mu)<\infty$. The group $G$ acts linearly on $HF_k(G,\mu)$ via $g.f(x)=f(g^{-1}x)$, and since $G$ is assumed to be connected and nilpotent, we may apply Lemma \ref{lem:eigen}. Fix some $\lambda=\lambda_j$. Let $f\in V_\lambda^{(1)}$. Then for every $x\in G$ we have $f(x^{-n})=\lambda(x)^nf(1)$, which, since $f$ is bounded by a polynomial, implies that $|\lambda(x)|=1$. 
%If $g_1,\ldots,g_t$ is a complete set of right-coset representatives of $H$ in $G$, this implies that $|f(hg_i)|=|f(g_i)|$ for every $h\in H$ and every $i$, and so $f$ is bounded on $G$.
The Liouville property for nilpotent groups (see \cite{Alex87, Gui80, Kai87})  therefore implies that $f$ is constant on $G$. This implies that $V_\lambda^\ast = \{0\}$ unless $\lambda$ is the trivial character $1$, and so in fact we have $HF_k(G,\mu) = V_1^\ast$. Finally, note that $f \in V_1^{(n)}$ if and only if for all $x \in G$ we have $\p_x f = x^{-1} f-f \in V_1^{(n-1)}$. 
Since $V_1^{(0)} = \{0\}$, for every $n$ this implies that if $f:G\to\C$ belongs to $V_1^{(n)}$ then $f\in P^n(G)$. In particular, every $f\in HF_k(G,\mu)$ satisfies $f\in P^n(G)$ for $n=\dim HF_k(G,\mu)$. 
\end{proof}

\bibliographystyle{alpha}
%\bibliography{Sections/REGbib}

\begin{thebibliography}{1234}

\bibitem[Ale87]{Alex87}
Georgios~K Alexopoulos.
\newblock Fonctions harmoniques bornees sur les groupes resubles.
\newblock {\em C. R. Acad. Sci. Paris I}, 305:777--779, 1987.

\bibitem[Ave76]{Avez76}
Andr{\'e} Avez.
\newblock Harmonic functions on groups.
\newblock In {\em Differential geometry and relativity}, pages 27--32.
  Springer, 1976.

\bibitem[Aze70]{Aze70}
R~Azencott.
\newblock Espaces de poisson des groupes localement compacts. lect.
\newblock {\em Notes in Maths}, 148, 1970.

\bibitem[Bas72]{Bass72}
Hyman Bass.
\newblock The degree of polynomial growth of finitely generated nilpotent
  groups.
\newblock {\em Proceedings of the London Mathematical Society}, 3(4):603--614,
  1972.

\bibitem[B{\'E}95]{BE95}
Philippe Bougerol and Laure {\'E}lie.
\newblock Existence of positive harmonic functions on groups and on covering
  manifolds.
\newblock {\em Ann. Inst. H. Poincar{\'e} Probab. Statist}, 31(1):59--80, 1995.

\bibitem[Bre07]{Bre07}
Emmanuel Breuillard.
\newblock Geometry of locally compact groups of polynomial growth and shape of
  large balls.
\newblock {\em arXiv preprint arXiv:0704.0095}, 2007.

\bibitem[CM97]{ColMin97}
Tobias~H Colding and William~P Minicozzi.
\newblock Harmonic functions on manifolds.
\newblock {\em Annals of mathematics}, 146(3):725--747, 1997.

\bibitem[Ers10]{Ers10}
Anna Erschler.
\newblock Poisson-furstenberg boundaries, large-scale geometry and growth of
  groups.
\newblock In {\em Proceedings of the International Congress of Mathematicians
  2010 (ICM 2010) (In 4 Volumes) Vol. I: Plenary Lectures and Ceremonies Vols.
  II--IV: Invited Lectures}, pages 681--704. World Scientific, 2010.

\bibitem[Fur02]{Fur02}
Alex Furman.
\newblock Random walks on groups and random transformations.
\newblock In {\em Handbook of dynamical systems}, volume~1, pages 931--1014.
  Elsevier, 2002.

\bibitem[Gro81]{Gro81}
Mikhael Gromov.
\newblock Groups of polynomial growth and expanding maps.
\newblock {\em Publications Math{\'e}matiques de l'Institut des Hautes
  {\'E}tudes Scientifiques}, 53(1):53--78, 1981.

\bibitem[Gui80]{Gui80}
Yves Guivarc’h.
\newblock Sur la loi des grands nombres et le rayon spectral d’une marche
  al{\'e}atoire.
\newblock {\em Ast{\'e}risque}, 74(3):4, 1980.

\bibitem[HMT17]{HMT17}
David Hume, John Mackay, and Romain Tessera.
\newblock Poincare profiles of groups and spaces.
\newblock {\em arXiv preprint arXiv:1707.02151}, 2017.

\bibitem[HSC93]{HebSa93}
Waldemar Hebisch and Laurent Saloff-Coste.
\newblock Gaussian estimates for markov chains and random walks on groups.
\newblock {\em The Annals of Probability}, pages 673--709, 1993.

\bibitem[Kai87]{Kai87}
V~Kaimanovich.
\newblock Boundaries of random walks on polycyclic groups and the law of large
  numbers for solvable lie groups.(russian. english summary) vestnik leningrad.
\newblock {\em Univ. Mat. Mekh. Astronom}, pages 93--95, 1987.

\bibitem[Kle10]{Kle10}
Bruce Kleiner.
\newblock A new proof of gromov’s theorem on groups of polynomial growth.
\newblock {\em Journal of the American Mathematical Society}, 23(3):815--829,
  2010.

\bibitem[Kol48]{Kol48}
Ellis~R Kolchin.
\newblock Algebraic matric groups and the picard-vessiot theory of homogeneous
  linear ordinary differential equations.
\newblock {\em Annals of Mathematics}, pages 1--42, 1948.

\bibitem[KV83]{KaiMa83}
Vadim~A Kaimanovich and Anatoly~M Vershik.
\newblock Random walks on discrete groups: boundary and entropy.
\newblock {\em The annals of probability}, pages 457--490, 1983.

\bibitem[Los87]{Lo87}
Viktor Losert.
\newblock On the structure of groups with polynomial growth.
\newblock {\em Mathematische Zeitschrift}, 195(1):109--117, 1987.

\bibitem[MPTY17]{MPTY17}
Tom Meyerovitch, Idan Perl, Matthew Tointon, and Ariel Yadin.
\newblock Polynomials and harmonic functions on discrete groups.
\newblock {\em Transactions of the American Mathematical Society},
  369(3):2205--2229, 2017.

\bibitem[MY16]{MeYa16}
Tom Meyerovitch and Ariel Yadin.
\newblock Harmonic functions of linear growth on solvable groups.
\newblock {\em Israel Journal of Mathematics}, 216(1):149--180, 2016.

\bibitem[PY18]{PeYa18}
Idan Perl and Ariel Yadin.
\newblock Harmonic functions on topological groups and polynomial growth.
\newblock 2018.

\bibitem[ST10]{ShaT10}
Yehuda Shalom and Terence Tao.
\newblock A finitary version of gromov’s polynomial growth theorem.
\newblock {\em Geometric and Functional Analysis}, 20(6):1502--1547, 2010.

\bibitem[Tes08]{Tes08}
Romain Tessera.
\newblock Large scale sobolev inequalities on metric measure spaces and
  applications.
\newblock {\em Revista Matem{\'a}tica Iberoamericana}, 24(3):825--864, 2008.

\end{thebibliography}

\end{document}